\newtheorem{theorem}{Theorem}[section]
\newtheorem{definition}[theorem]{Definition}
\newtheorem{remark}[theorem]{Remark}
\newtheorem{lemma}[theorem]{Lemma}
\newtheorem{prop}[theorem]{Proposition}
\newtheorem{corollary}[theorem]{Corollary}
\newtheorem{example}[theorem]{Example}
\newcommand{\al}{\alpha}
\newcommand{\lb}{\lambda}
\newcommand{\N}{\mathbb{N}}
\newcommand{\R}{\mathbb{R}}
\newcommand{\om}{\omega}
\newcommand{\Om}{\Omega}
\newcommand{\cO}{\mathcal{O}}
\newcommand{\cA}{\mathcal{A}}
\newcommand{\cF}{\mathcal{F}}
\newcommand{\cL}{\mathcal{L}}
\newcommand{\cK}{\mathcal{K}}
\newcommand{\cU}{\mathcal{U}}
\newcommand{\cY}{\mathcal{Y}}
\newtheorem*{thmA}{Theorem A}
\newtheorem*{thmB}{Theorem B}
\title[Dynamics of dissipative contact Hamiltonian systems]{Global dynamics of contact Hamiltonian systems (I): monotone systems}
\author[L. Jin]{Liang Jin}
\address{Department of Mathematics, Nanjing University of Science and Technology,
Nanjing 210094, China}
\email{jl@njust.edu.cn}
\author[J. Yan] {Jun Yan}
\address{School of Mathematical Sciences, Fudan University,
Shanghai 200433, China}
\email{yanjun@fudan.edu.cn}
\thanks{L. Jin is supported in part by the National Natural Science Foundation of China (Grant No. 11571166, 11901293) and Start-up Foundation of Nanjing University of Science and Technology (No. AE89991/114). J. Yan is supported in part by the National Natural Science Foundation of China (Grant No. 11790273, 11631006)}
\begin{document}
\maketitle
\vspace{0.1in}

\tableofcontents

\begin{abstract}
  This article is devoted to a description of the dynamics of the phase flow of monotone contact Hamiltonian systems. Particular attention is paid to locating the maximal attractor (or repeller), which could be seen as the union of compact invariant sets, and investigating its topological and dynamical properties. This is based on an analysis from the viewpoint of gradient-like systems.
\end{abstract}

\section{Introduction and statement of the main results}
Let $M$ be an $n$-dimensional, closed, connected $C^{\infty}$ manifold. Equipped with the canonical symplectic form $\Om=dx\wedge dp$, the cotangent bundle $T^\ast M$ becomes a symplectic manifold. A Hamiltonian system on $(T^\ast M,\om)$ is defined as the symplectic gradient vector field of a $C^2$ function, called Hamiltonian, on $T^\ast M$. As the natural framework of classical and celestial mechanics, Hamiltonian systems received much attention and were extensively studied since the time of Newton. However, Hamiltonian systems can only be used as the mathematical model of conservative systems such as classical mechanical system or the micro-canonical ensemble in statistical mechanics. Therefore, to apply theoretical results to systems exchanging energy with an environment, one has to find a more suitable generalization of Hamiltonian systems.

\vspace{5pt}
As direct generalizations of Hamiltonian systems via characteristic theory for first order PDEs, contact Hamiltonian systems becomes a worth trying choice. Such systems are determined by the standard contact structure and a $C^2$ function on the manifold of 1-jets of functions on $M$ and have not been considered as much as their symplectic counterparts. In recent years, several applications of contact Hamiltonian dynamics have been found in equilibrium or irreversible thermodynamics, statistical physics and classical mechanics of dissipative systems. For a nice survey of such applications, we refer to \cite{Br}.

\vspace{5pt}
The mathematical study of contact Hamiltonian system begins with a typical example, the discounted systems. The dynamics of discounted system were investigated by from different aspects including quasi-periodic motions \cite{CCL1,CCL2}, Aubry-Mather sets in low dimension model \cite{Ca,LC1,LC2,LC3}, generalization of Aubry-Mather theory and weak KAM theory \cite{MS} and applications to PDE problems \cite{DFIZ,IS}. The variational theories for more general contact Hamiltonian systems with arbitrary degree of freedom were explored in the series of works \cite{SWY,WWY1,WWY2,WWY3} and \cite{CCJWY,CCWY}.

\vspace{5pt}
Among the above miscellaneous works, \cite{MS,WWY3} is of particular interests to us because variational theory is applied to understand the global dynamics of monotone contact Hamiltonian systems under Tonelli assumptions. More precisely, in \cite{WWY3}, the authors find a compact subset of the phase space containing all $\omega$-limit sets of the phase flow; while in the in \cite{MS}, the authors defines and locates the maximal attractor, i.e., the union of all compact, invariant sets, which, at least to us, is a suitable and promising concept in the investigation of the dissipative feature of such systems. Inspired by these two works, the aim of this paper is two fold, to study the maximal attractor and global dynamics of the system in more detail, and to present the results in an elementary way under more relaxed assumptions. We want to emphasis that our approach is based on an analysis of the system from the gradient-like viewpoint (constructing Lyapunov functions of the phase flow) and is independent of the variational approach mentioned before.

\vspace{5pt}
Once and for all, we choose an auxiliary complete Riemannian metric on $M$ and, with slight abuse of notation, use $\|\cdot\|_x$ to denote the norm induced on the cotangent bundle $T^{\ast}M$. Via a canonical diffeomorphism, we identify $(T^{\ast}M\times\R,\alpha)$ with the contact manifold of 1-jets of functions on $M$, where $\al=du-pdx$ denotes the standard tautological $1$-form. We consider a $C^{2}$ function $H:T^{\ast}M\times\R\rightarrow\R$, called monotone contact Hamiltonian, satisfying
\begin{itemize}
  \item [\textbf{(H1)}] $\frac{\partial^2 H}{\partial p^2}(x,p,u)$ is positive definite for every $(x,p,u)\in T^{\ast}M\times\R$,
  \item [\textbf{(H2)}] $\lim_{\|p\|_x\rightarrow\infty}H(x,p,u)\rightarrow\infty$ for every $(x,u)\in M\times\R$,
\end{itemize}
and
\begin{itemize}
  \item [\textbf{(M$_-$)}] $\frac{\partial H}{\partial u}(x,p,u)\geq\lb$ for some $\lb>0$ and every $(x,p,u)\in T^{\ast}M\times\R$,
\end{itemize}
or alternatively, \textbf{(H1)-(H2)} and
\begin{itemize}
  \item [\textbf{(M$_+$)}] $\frac{\partial H}{\partial u}(x,p,u)\leq-\lb$ for some $\lb>0$ and every $(x,p,u)\in T^{\ast}M\times\R$,
\end{itemize}
and the contact Hamiltonian vector field $X_{H}$ associated to $H$ and $\al$ by
\begin{equation}\label{def-ch}
\mathcal{L}_{X_H}\al=-\frac{\partial H}{\partial u}\al,\quad\quad\al(X_H)=-H,
\end{equation}
where $\mathcal{L}_{X_H}$ denotes the Lie derivative along the vector field $X_H$. The aim of this paper is to study the global dynamics of the phase flow $\Phi^{t}_{H}$ generated by $X_{H}$. Physically, the orbit of such a flow generalizes the motion of particles in mechanical systems with friction or simulation.

\vspace{5pt}
For any $z\in T^{\ast}M\times\R$, let $\al(z)$ (resp. $\om(z)$) denotes the $\al$-limit set of $z$ (resp. $\om$-limit set of $z$) under $\Phi^{t}_H$. It turns out that the global dynamics of $\Phi^{t}_{H}$ is closely related to a compact, invariant set called maximal attractor (or repeller), which is formally
\begin{definition}\label{mga}
A compact set $\cA\subset T^*M\times\R$ is called a global attractor (resp. repeller) of $\Phi^t_{H}$ if it is $\Phi^t_{H}$-invariant and for any $z\in T^*M\times\R, \om(z)$ (resp. $\al(z)$) $\subseteq\cA$.

\vspace{5pt}
Moreover, a global attractor (resp. repeller) $\cA$ is called maximal if it is a maximal element in the partially ordered set of all global attractors with the relation $\subseteq$.
\end{definition}

\begin{remark}
The definition of global attractor (resp. repeller) is equivalent to that on \cite[page 780]{MS}: for any neighborhood $\cO$ of $\cA$ and $z\in T^*M\times\R$, there is $T(z,\cO)>0$ (resp. $<0$) such that $\Phi^t_H(z)\in\cO$ for $t\geq T$ (resp. $t\leq T$).
\end{remark}

\begin{remark}
The notion of attractor (or repeller) is so important in the study of dynamical systems that many literatures devote to giving a widely accepted definition, for example \cite{C,Mi2,Mi3}.
\end{remark}

Although there maybe more than one global attractors for $\Phi^t_{H}$, the maximal attractor (resp. repeller) of $\Phi^t_{H}$ is unique (if exists) and equals the union of compact $\Phi^t_{H}$-invariant sets. We shall denote it by $\cA_H$.

\vspace{5pt}
Since the phase space $T^{\ast}M\times\R$ of $\Phi^t_H$ is non-compact, the existence of maximal attractor or repeller is not a trivial fact. Using tools from Aubry-Mather theory and weak KAM theory, this fact was established in \cite{MS} for the case of conformally symplectic flow, i.e. $H(x,p,u)=\lb u+h(x,p), \lb>0$, and in \cite{WWY3} for monotone contact Hamiltonians satisfying \textbf{Tonelli assumptions}.

\vspace{5pt}
Besides another proof of this fact, our first result offers some information on the topological structure of the maximal attractor (resp. repeller), namely
\begin{thmA}
Assume the contact Hamiltonian $H\in C^2(T^{\ast}M\times\R,\R)$ satisfies \textbf{(H1)-(H2)} and \textbf{(M$_-$)} (resp. \textbf{(H1)-(H2)} and \textbf{(M$_+$)}), then
\begin{itemize}
  \item [\textbf{(A1)}] $\Phi^t_H$ is forward (resp. backward) complete, i.e., $\Phi^t_H$ is well-defined for all $t\in[0,+\infty)\,\,($resp. $t\in(-\infty,0])$.

  \vspace{5pt}
  \item [\textbf{(A2)}] The maximal attractor (resp. repeller) $\cA_H$ for $\Phi^t_H$ exists and $\alpha(z)\neq\emptyset\,\,($resp. $\omega(z)\neq\emptyset)$ if and only if $z\in\cA_H$. Precisely, for every neighborhood $\cO$ of $\cA_H$ and every compact set $\cK\subset T^\ast M\times\R$, there is $T(\cK,\cO)>0\,\,($resp. $T(\cK,\cO)<0)$ such that
        \[
        \Phi^t_H(\cK)\subset\cO\quad\text{for all  }t\geq T\,\,(\text{resp. $t\leq T$}).
        \]

  \vspace{5pt}
  \item [\textbf{(A3)}] There exists a basis of neighborhoods $\{\cO_{t}\}_{t\geq0}$ of $\cA_H$ such that every $\cO_{t}$ is homotopic equivalent to $M$. In particular, $\cA_H$ is connected.
\end{itemize}
\end{thmA}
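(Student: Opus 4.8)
The plan is to treat the two cases symmetrically via the involution $(x,p,u,t)\mapsto(x,-p,-u,-t)$ together with $H\mapsto -H(x,-p,-u)$, which converts $\textbf{(M}_-\textbf{)}$ into $\textbf{(M}_+\textbf{)}$ and time-forward statements into time-backward ones; so I would work throughout under $\textbf{(H1)-(H2)}$ and $\textbf{(M}_-\textbf{)}$. The engine of the whole theorem is a Lyapunov-type estimate. Writing the vector field $X_H$ in the canonical coordinates coming from $\al=du-pdx$, one has $\dot x=\partial H/\partial p$, $\dot p=-\partial H/\partial x-p\,\partial H/\partial u$, $\dot u=p\,\partial H/\partial p-H$. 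The first thing I would record is the identity $\frac{d}{dt}H(\Phi^t_H(z))=-\frac{\partial H}{\partial u}\,H$ along orbits (a standard consequence of \eqref{def-ch}), which with $\textbf{(M}_-\textbf{)}$ gives the differential inequality controlling the sign and exponential behaviour of $H$ along trajectories: if $H\le 0$ it stays $\le 0$ and $|H|$ does not grow, while if $H>0$ it decays at rate at least $\lb$. From this I would extract the key \emph{a priori} bound: there is a compact set $\cK_0=\{H\le c_0\}\cap\{\|p\|_x\le R_0\}$-type region, defined using $\textbf{(H1)-(H2)}$ for the $p$-direction and $\textbf{(M}_-\textbf{)}$ for the $u$-direction, that every forward orbit enters in finite time and never leaves. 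The $u$-coordinate is controlled because $\dot u = p\,\partial H/\partial p - H$ together with convexity and coercivity bounds $\dot u$ from above by an affine function of $u$ on the relevant set, preventing escape to $u=+\infty$, while $\textbf{(M}_-\textbf{)}$ forces a drift downward that prevents escape to $u=-\infty$ in the region where $H$ would otherwise be unbounded below.

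For \textbf{(A1)}, forward completeness: the only way a solution can fail to extend is blow-up in finite time, i.e.\ $\|p\|_x\to\infty$ or $|u|\to\infty$. The estimates above confine $H$ and hence (via $\textbf{(H2)}$) confine $\|p\|_x$ on any finite time interval, and then the bound on $\dot u$ confines $u$; a Gronwall argument closes this. For \textbf{(A2)}: define $\cA_H$ as the set of points whose \emph{full} backward orbit is defined and bounded, equivalently $\bigcap_{t\ge0}\Phi^t_H(\cK_0)$ after checking this intersection is $\Phi^t_H$-invariant, compact, and nonempty. The absorbing property — for every compact $\cK$ and neighborhood $\cO$ of $\cA_H$, $\Phi^t_H(\cK)\subset\cO$ for $t\ge T(\cK,\cO)$ — follows from the standard attractor argument: $\cK$ enters $\cK_0$ uniformly in finite time, and then one uses compactness of $\cK_0$ plus continuity of the flow to show $\Phi^t_H(\cK_0)$ shrinks into every neighborhood of $\bigcap_s\Phi^s_H(\cK_0)$. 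The characterization "$\al(z)\ne\emptyset\iff z\in\cA_H$": if $z\in\cA_H$ the backward orbit lies in the compact $\cA_H$ so has nonempty $\al$-limit; conversely a nonempty $\al(z)$ is a compact invariant set, hence contained in $\cA_H$, and then $z$ itself, being approached in backward time by points of $\cA_H$ flowed forward\ldots here one must be slightly careful and instead argue that $\al(z)\ne\emptyset$ forces the backward orbit of $z$ to be defined for all negative time and bounded, putting $z$ in $\cA_H$ by definition.

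For \textbf{(A3)} I would build the neighborhood basis explicitly from sublevel sets of $H$ truncated in $p$. Concretely, for large $c$ the set $\cO_c=\{(x,p,u): H(x,p,u)<c\}$ should already be forward-invariant (by the $H$-decay inequality once $c>0$ is past the maximum of $H$ on $\cA_H$) and should deformation retract onto $\cA_H$ as $c\downarrow$ its infimum; but to get the homotopy type $M$ one uses $\textbf{(H1)-(H2)}$: for fixed $(x,u)$ the sublevel set $\{p:H(x,p,u)<c\}$ is, by strict convexity and coercivity, either empty or a convex (hence contractible, disk-like) set, so $\cO_c$ fibers over a subset of $M\times\R$ with contractible fibers; combined with the fact (again from $\textbf{(M}_-\textbf{)}$ and the $\dot u$ bound) that the relevant base in $M\times\R$ retracts onto $M\times\{0\}$, one concludes $\cO_c\simeq M$. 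Connectedness of $\cA_H$ is then immediate since $\cA_H=\bigcap_c\cO_c$ is a nested intersection of connected (indeed homotopy-equivalent to the connected $M$) compact sets.

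The main obstacle I anticipate is making the \emph{a priori} confinement of the $u$-coordinate fully rigorous and uniform: unlike $\|p\|_x$, which is pinned directly by $H$ through $\textbf{(H2)}$, the $u$-direction is only controlled through the coupled inequalities for $\dot u$ and $\frac{d}{dt}H$, and getting a genuinely forward-invariant compact region (not just finite-time bounds) requires carefully balancing the downward drift from $\textbf{(M}_-\textbf{)}$ against the growth of $\dot u=p\,\partial H/\partial p-H$ in regions where $p$ is large. The second delicate point is verifying that the fibered sublevel sets $\cO_c$ really are homotopy equivalent to $M$ — one needs the fiberwise-convex structure to vary continuously and the $u$-retraction to be compatible with it, which is a gluing/parametrized-contractibility argument rather than a one-line observation.
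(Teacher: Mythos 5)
Your skeleton for \textbf{(A1)--(A2)} (exponential decay of $|H|$ along orbits, a compact forward-invariant absorbing set $\cK_0$, $\cA_H:=\bigcap_{t\ge0}\Phi^t_H(\cK_0)$, then the standard shrinking argument for uniform attraction and the careful direction of the $\al$-limit characterization) is essentially the paper's. But the step you yourself flag as the main obstacle is a genuine gap, and it is the heart of the theorem: a lower bound on $u$ along forward orbits. Without it, \textbf{(H2)} gives no bound on $\|p\|_x$ from $H\le e$ alone --- under \textbf{(M$_-$)} one has $H(x,p,u)\le H(x,p,0)+\lb u$ for $u\le0$, so $\{H\le e\}$ contains every point with $u$ sufficiently negative and is non-compact --- hence your $\cK_0$ is not yet constructed and neither completeness nor the attractor follows. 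Your sketch of how to close this is also sign-confused: a ``downward drift'' from \textbf{(M$_-$)} would cause, not prevent, escape to $u=-\infty$; what actually prevents it is an \emph{upward} drift, since convexity gives $\dot u=\langle p,\partial_p H\rangle-H\ge -H(x,0,u)\ge -\max_xH(x,0,0)-\lb u$ for $u\le0$. That differential inequality can in fact be closed by a comparison argument into a compact absorbing set, which would yield a more elementary proof of \textbf{(A1)--(A2)} than the paper's --- but you did not carry it out. The paper instead obtains the confinement from a second Lyapunov function $F(z)=u_-(x)-u$, where $u_-$ is the viscosity solution of the stationary equation $H(x,\partial_xu,u)=0$: Theorem \ref{Ly-2nd} shows $\cL_{X_H}F\le-\lb F$ on $\{F\ge0\}$, so $\cU=\{u\ge u_-(x)\}$ and $\cU_\delta$ are forward invariant and $F$ is bounded above along every forward orbit. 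This is the key ingredient absent from your proposal. (Minor point: the reduction of \textbf{(M$_+$)} to \textbf{(M$_-$)} uses $H(x,-p,-u)$, not $-H(x,-p,-u)$, which would destroy \textbf{(H1)--(H2)}.)

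For \textbf{(A3)} your construction fails as stated. The sets $\cO_c=\{H<c\}$ are not compact and do not form a neighborhood basis of $\cA_H$: for every $c>0$, $\{H<c\}$ contains all points with $u$ sufficiently negative, and $\bigcap_{c>0}\cO_c=\{H\le 0\}$, which is unbounded and far larger than $\cA_H\subset H^{-1}(0)\cap\cU$; so ``$\cA_H=\bigcap_c\cO_c$'' is false and connectedness cannot be deduced from these sets. The paper instead takes the \emph{compact} forward-invariant set $\cY_\delta=H^{-1}((-\infty,\delta))\cap\cU_\delta$ (again the viscosity solution is needed to cut off $u$ from below), exhibits an explicit two-stage strong deformation retraction onto the section $x\mapsto(x,P_\ast(x,u_-(x)),u_-(x))$ --- first contracting $p$ to the fiberwise minimizer $P_\ast(x,u)$ of the convex function $H(x,\cdot,u)$, then moving $u$ to $u_-(x)$, checking at each stage that the homotopy stays inside $\cY_\delta$ --- and then sets $\cO_t=\Phi^t_H(\cY_\delta)$, which are homeomorphic to $\cY_\delta$, hence homotopy equivalent to $M$, and shrink to $\cA_H$ by Lemma \ref{ap-Atr}. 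Your fiberwise-convexity observation is the right germ of the $p$-retraction, but the $u$-retraction must land on the graph of $u_-$ (not on $u=0$), because that is the only section one can certify lies in $\cY_\delta$ for every $\delta$.
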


\begin{remark}
\textbf{(A2)} obviously implies Definition \ref{mga}. The statement \textbf{(A2)} means that we may choose $T(z,\cO)$ to be uniform for all $z$ in any compact set $\cK$ of $\,T^{\ast}M\times\R$.
\end{remark}

To study the dynamics of $\Phi^t_H$ on $\cA_H$, we shall focus on monotone contact Hamiltonians satisfying the additional assumptions:
\vspace{3pt}
\begin{enumerate}
  \item [\textbf{(H3)}] $\frac{\partial H}{\partial p}(x,0,u)=0$ for every $(x,u)\in M\times\R$,
\end{enumerate}
\vspace{3pt}
and, if we set
\[
\cF_{H}=\{(x_0,0,u_0)\in T^{\ast}M\times\R\,:\,\partial_{x}H(x_0,0,u_0)=0, H(x_0,0,u_0)=0\},
\]
\begin{enumerate}
  \item [\textbf{(H4)}] $x_0$ is a nondegenerate critical point of the function $x\mapsto H(x,0,u_0)$ for every $(x_0,u_0)\in\cF_{H}$.
\end{enumerate}

\begin{remark}
\textbf{(H3)} means that the convex function $p\mapsto H(x,p,u)$ attains its minimum at $0\in T^{\ast}_xM$ for every $(x,u)\in M\times\R$. According to \textbf{(H3)}, $\cF_{H}$ consists of all equilibria of $X_H$.
\end{remark}

\begin{remark}
Combining \textbf{(H4)} and the compactness of $M$, for every $u_0\in\R$, there are only finitely many isolated points $x_0\in M$ such that $(x_0,u_0)\in\cF_{H}$. Notice that contact Hamiltonians satisfying \textbf{(H4)} form an open and dense subset of the space of contact Hamiltonians satisfying \textbf{(H1)-(H3)}.
\end{remark}

We further explain the assumptions \textbf{(H3)-(H4)} by giving the following
\begin{example}
There is a natural class of contact Hamiltonians satisfying \textbf{(H3)}, namely ones satisfying the symmetry assumption:
\[
H(x,p,u)=H(x,-p,u)\quad \text{ for every }\quad (x,p,u)\in T^{\ast}M\times\R.
\]

In particular, the Hamiltonian $H:T^{\ast}M\times\R\rightarrow\R$ defined by
\begin{equation}\label{dmh}
H(x,p,u)=K(x,p)+V(x)+\lb u,\\
\end{equation}
where $K(x,p)=\frac{1}{2}\|p\|^{2}_{x}$ denotes the kinetic energy of moving particles and $V:M\times\R\rightarrow\R$ is a $C^{2}$ potential, is a monotone Hamiltonian satisfying \textbf{(H3)}. In addition, if $V$ is a Morse function, then \textbf{(H4)} is satisfied.
\end{example}

To describe the structure of $\cA_H$, we need an elementary class of invariant sets other than equilibria. For two distinct equilibria $z_0,z_1\in\cF_H$, we use $\Sigma(z_0,z_1)$ to denote the set of all $z\in T^{\ast}M\times\R$ satisfying
\[
\lim_{t\rightarrow-\infty}\Phi^t_H(z)=z_0,\quad \lim_{t\rightarrow+\infty}\Phi^t_H(z)=z_1.
\]
Let
\[
\Sigma_H=\bigcup_{z_0,z_1\in\cF_H}\,\Sigma(z_0,z_1),
\]
then $\Sigma_H$ is clearly $\Phi^{t}_{H}$-invariant. Our second main result can be summarized into the following
\begin{thmB}\label{dyn1}
Assume the contact Hamiltonian $H\in C^2(T^{\ast}M\times\R,\R)$ satisfies \textbf{(H1)-(H4)} and \textbf{(M$_-$)} (resp. \textbf{(H1)-(H4)} and \textbf{(M$_+$)}), then
\begin{itemize}
  \item [\textbf{(B1)}] The maximal attractor (resp. repeller) $\cA_H=\cF_{H}\cup\Sigma_H$. In particular, $\cA_H$ is path-connected.

  \vspace{5pt}
  \item [\textbf{(B2)}] For any $z\in T^{\ast}M\times\R$, both $\alpha(z), \omega(z)$ consist of at most one equilibrium in $\cF_H$.
  \vspace{5pt}
  \item [\textbf{(B3)}] For two equilibria $z_0=(x_0,0,u_0),z_1=(x_1,0,u_1)\in\cF_H$, assume $\Sigma(z_0,z_1)\neq\emptyset$, then $u_0<u_1$.
\end{itemize}
\end{thmB}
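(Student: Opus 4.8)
We treat the case \textbf{(M$_-$)}; the case \textbf{(M$_+$)} is entirely analogous after reversing time. The plan is to expose the gradient-like structure of $\Phi^t_H$ on $\cA_H$ and read off \textbf{(B1)}--\textbf{(B3)} from it. In Darboux coordinates $(x,p,u)$ one has $\dot x=\partial_pH$, $\dot p=-\partial_xH-p\,\partial_uH$, $\dot u=p\,\partial_pH-H$, whence the identity $\frac{d}{dt}H(\Phi^t_H z)=-H\,\partial_uH$ along orbits. Two consequences are immediate: $\{H=0\}$ is invariant (the scalar $h(t)=H(\Phi^t_H z)$ solves a linear homogeneous ODE), and $\tfrac12H^2$ is a global Lyapunov function, $\frac{d}{dt}\tfrac12H^2=-H^2\partial_uH\le-\lb H^2\le 0$. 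Since $\cA_H$ equals the union of the compact invariant sets, I would first show $\cA_H\subseteq\{H=0\}$: on a compact invariant set the orbit of $z$ has nonempty $\al$- and $\om$-limit sets, on each of which $H^2$ is constant; the Lyapunov identity together with $\partial_uH\ge\lb>0$ forces these constants to vanish, so $H^2\circ\Phi^t_H z$ decreases from $0$ to $0$, i.e. is identically $0$.

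On the invariant hypersurface $\{H=0\}$ the third equation reads $\dot u=p\,\partial_pH$, which by strict convexity of $p\mapsto H(x,p,u)$ with minimum at $p=0$ (hypotheses \textbf{(H1)}, \textbf{(H3)}) is $\ge 0$, with equality exactly when $p=0$; and if $p\equiv 0$ along an orbit then $\dot p=-\partial_xH$ and $\dot u=-H$ force $\partial_xH=0=H$, so the orbit is an equilibrium. Thus $u$ restricted to $\cA_H$ is nondecreasing along orbits and strictly increasing off $\cF_H$, making $(\cA_H,\Phi^t_H)$ gradient-like with critical set $\cF_H$. Moreover $\cF_H$ is finite: by \textbf{(M$_-$)}, $u\mapsto H(x_0,0,u)$ is strictly increasing, so each $x_0$ admits at most one $u_0$ with $(x_0,0,u_0)\in\cF_H$; and by \textbf{(H4)}, $(x,u)\mapsto(\partial_xH(x,0,u),H(x,0,u))$ is a submersion at points of $\cF_H$, so $\cF_H$ is discrete; being closed and contained in the compact $\cA_H$, it is finite.

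For \textbf{(B2)} the crucial observation is that $u(\Phi^t_H z)$ converges as $t\to+\infty$, for every $z$: off $\{H=0\}$ the identity above gives the decay $|H(\Phi^t_H z)|\le|H(z)|e^{-\lb t}$, so $\int_0^\infty|H|<\infty$; meanwhile $\int_0^t p\,\partial_pH$ is nondecreasing in $t$ and bounded, since the forward orbit is bounded by Theorem A, \textbf{(A2)}; hence $u(\Phi^t_H z)=u(z)+\int_0^t p\,\partial_pH-\int_0^t H$ converges. Therefore $u$ is constant on $\om(z)$; as $\om(z)$ is a nonempty, compact, connected, invariant subset of $\cA_H\subseteq\{H=0\}$ on which $p\,\partial_pH\equiv 0$, it consists of equilibria, hence is a single point of $\cF_H$ by connectedness and discreteness. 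Running the same argument backward (the backward orbit stays in $\cA_H$ when $z\in\cA_H$, while $\al(z)=\emptyset$ otherwise by Theorem A) shows $\al(z)$ is at most one equilibrium. For \textbf{(B3)}: if $z\in\Sigma(z_0,z_1)$ with $z_0\neq z_1$, the orbit of $z$ is not an equilibrium, so $p$ vanishes at most at isolated times, $\dot u=p\,\partial_pH>0$ off those, $u$ is strictly increasing along the orbit, and $u_0=\lim_{t\to-\infty}u(\Phi^t_H z)<\lim_{t\to+\infty}u(\Phi^t_H z)=u_1$.

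Finally \textbf{(B1)}. The inclusion $\cF_H\cup\Sigma_H\subseteq\cA_H$ is clear: equilibria are compact invariant, and for $z\in\Sigma(z_0,z_1)$ the set $\overline{\{\Phi^t_H z:t\in\R\}}=\{\Phi^t_H z:t\in\R\}\cup\{z_0,z_1\}$ is compact and invariant. Conversely, for $z\in\cA_H$ we have $\al(z)=\{z_0\}$ and $\om(z)=\{z_1\}$ by \textbf{(B2)}; if $z_0=z_1$ then $u$ is constant along the orbit, forcing $z\in\cF_H$, and otherwise $z\in\Sigma(z_0,z_1)$. For path-connectedness, $\cA_H$ is connected by Theorem A, \textbf{(A3)}, and each orbit closure $\{\Phi^t_H z:t\in\R\}\cup\{z_0,z_1\}$ is the continuous image of $[-\infty,+\infty]$, hence a path joining two points of the finite set $\cF_H$; so $\cA_H$ has finitely many path-components, each containing an equilibrium, and since they are disjoint it suffices to prove each is relatively closed, for then connectedness of $\cA_H$ leaves only one. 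This is the step I expect to be the main obstacle: if $w$ lies in the closure of a path-component $C$ but in a different one $C'$, pick $w_n\to w$ with $w_n\in C$; along a subsequence (using finiteness of $\cF_H$) the equilibria $q^\pm=\lim_{t\to\pm\infty}\Phi^t_H w_n$ are fixed and lie in $C$, while for $T$ large $\Phi^T_H w_n\to\Phi^T_H w$ comes arbitrarily close to the equilibrium $q'$ with $\om(w)=\{q'\}\subseteq C'$; a standard broken-orbit limiting argument (as in the theory of gradient-like flows) then produces a heteroclinic chain from $q'$ to $q^+$ inside $\cA_H$, so $q'\in C\cap C'$, a contradiction. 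Note that \textbf{(H4)} only makes the equilibria isolated, not necessarily hyperbolic, which is why this last step needs the limiting argument rather than a local stable/unstable manifold picture.
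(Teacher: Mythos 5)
Your proposal is correct and, for the core of Theorem B, follows essentially the paper's route: the identity $\dot H=-H\,\partial_uH$ confines all compact invariant sets to $H^{-1}(0)$, where $\dot u=\langle\partial_pH,p\rangle\ge 0$ by convexity and \textbf{(H3)} (the paper's Lemma \ref{conv} and ``third Lyapunov function'', Theorem \ref{Ly-3rd}), so that limit sets are connected subsets of the discrete set $\cF_H$, yielding \textbf{(B2)}, \textbf{(B3)} and the decomposition $\cA_H=\cF_H\cup\Sigma_H$. Two points of divergence are worth recording. First, you obtain convergence of $u\circ\Phi^t_H(z)$ for every $z$ by splitting $\dot u$ into the integrable term $-H$ and the monotone bounded term $\langle\partial_pH,p\rangle$, where the paper argues through non-wandering points; both work, and yours is slightly more direct. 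Second, and more substantively, the treatments of path-connectedness differ: the paper asserts that each set $\{z_0,z_1\}\cup\Sigma(z_0,z_1)$ is closed ``since $\cA_H$ is compact'' and concludes that path-components are finite unions of closed sets. That assertion is not justified and is false for general gradient-like flows --- the closure of a space of connecting orbits typically contains broken trajectories through intermediate equilibria (already for the gradient flow of a Morse function on $S^{2}$ with two maxima, one saddle and one minimum) --- so the paper's shortcut has a gap precisely at the step you flagged as the main obstacle. Your repair, proving that each path-component is closed via a broken-orbit limiting argument that produces a finite heteroclinic chain inside $\cA_H$ (finite because $u$ strictly increases along it and $\cF_H$ is finite) joining an equilibrium of one component to an equilibrium of the other, is the correct and standard way to close this; it is only sketched in your write-up and would need to be written out (extraction of exit points from small balls around intermediate equilibria, passage to subsequential limits, termination of the chain at $\omega(w_n)$), but the sketch is sound and the hypotheses you use --- isolated equilibria, a continuous strict Lyapunov function on a compact invariant set --- are exactly what is available here.
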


\vspace{5pt}
The remaining of this paper is organized as follows. In Section 2, we analysis system \eqref{ch} from the viewpoint of gradient-like system by deriving two Lyapunov functions that are crucial for the proof of Theorem A. Section 3 is devoted to the proof of Theorem A. In Section 4, we prove Theorem B and apply it to the discounted systems. The appendix contains preliminaries which maybe helpful for understanding the main body of this paper.

\section{Analysis from a gradient-like viewpoint}
An important feature of the monotone contact Hamiltonian systems is that the phase flow $\Phi^t_H$ possesses various Lyapunov functions on corresponding domains of the phase space. In this section, we shall give a detailed analysis of $\Phi^t_H$ in this direction, then use these results to derive some forward or backward invariant sets. Efforts are paid to the presentation to minimize the preknowledge.

\vspace{5pt}
\subsection{Settings and first Lyapunov function}
~\\
Let $TM$ and $T^{\ast}M$ denote the tangent and cotangent bundle of $M$. A point of $TM$ will be denoted by $(x,\dot{x})$, where $x\in M$ and $\dot{x}\in T_xM$, and a point of $T^{\ast}M$ by $(x,p)$, where $p\in T^{\ast}_{x}M$ is a linear form on $T_{x}M$. The canonical pairing between tangent and cotangent bundles are denoted by $\langle\cdot,\cdot\rangle$. We shall use either $z$ or the local coordinates $(x,p,u)$ to denote a point of $T^{\ast}M\times\R$. In this and later sections, we always assume the contact Hamiltonian $H\in C^{2}(T^{\ast}M\times\R,\R)$ satisfies \textbf{(H1)-(H2)}.

\vspace{5pt}
To begin with, we use local coordinates to express the tautological $1$-form $\al$ and the contact Hamiltonian vector field $X_H$ as $\al=du-pdx$, and
\begin{equation}\label{ch}
X_H:
\begin{cases}
\dot{x}=\frac{\partial H}{\partial p},\\
\dot{p}=-\frac{\partial H}{\partial x}-\frac{\partial H}{\partial u}\cdot p,\\
\dot{u}=\frac{\partial H}{\partial p}\cdot p-H,
\end{cases}
\end{equation}
respectively. Since $H\in C^{2}(T^{\ast}M\times\R,\R)$, \eqref{ch} shows that $X_H$ is a $C^1$ vector field on $T^{\ast}M\times\R$. Thus the fundamental theorems for ordinary differential equations states that, for every $z=(x,p,u)\in T^{\ast}M\times\R$, there is a unique integral curve of $X_H$ passing through $z$, with the maximum existence interval $(a(z),b(z))$, where $a(z)<0<b(z)$. We use
\[
\text{either}\,\,\Phi^{t}_H(z)\quad\quad\text{or}\,\, z(t)=(x(t),p(t),u(t)),\quad t\in(a(z),b(z))
\]
to denote the integral curve through $z$. Notice that $z(t)$ is $C^1$ with respect to $t$, by \eqref{ch}, one can compute as
\begin{align*}
&\dot{H}(z(t))=\langle dH,X_H\rangle\,(z(t))=\bigg[\frac{\partial H}{\partial x}\cdot\dot{x}+\frac{\partial H}{\partial p}\cdot\dot{p}+\frac{\partial H}{\partial u}\cdot\dot{u}\bigg]\,(z(t))\\
=\,\,&\bigg[\frac{\partial H}{\partial x}\cdot\frac{\partial H}{\partial p}-\frac{\partial H}{\partial p}\cdot\big(\frac{\partial H}{\partial x}+\frac{\partial H}{\partial u}\cdot p\big)+\frac{\partial H}{\partial u}\cdot\big(\frac{\partial H}{\partial p}\cdot p-H\big)\bigg]\,(z(t))\\
=\,\,&-\frac{\partial H}{\partial u}\,(z(t))\cdot H\,(z(t)),
\end{align*}
which states that
\begin{equation}\label{energy}
H(z(t))=e^{-\int^{t}_0\frac{\partial H}{\partial u}(z(s))ds}\cdot H(z),\quad t\in(a(z),b(z)).
\end{equation}
Combining \eqref{energy} and the monotonicity assumptions on $H$, one has the well-known

\begin{prop}[\textbf{First Lyapunov function}]\label{Ly-1st}
For all $z\in T^{\ast}M\times\R$, the contact Hamiltonian $H$ never changes its sign along the integral curve $\Phi^t_H(z)$. Moreover, assume $H:T^{\ast}M\times\R\rightarrow\R$ satisfies \textbf{(M$_-$)} (resp. \textbf{(M$_+$)}), then
\begin{equation}\label{Ly-1}
\begin{split}
|H(z(t))|&\leq e^{-\lb t}\cdot|H(z)|,\quad t\in[0,b(z))\\
(\text{resp.}\quad|H(z(t))|&\leq e^{\lb t}\cdot|H(z)|,\quad t\in(a(z),0]).
\end{split}
\end{equation}
\end{prop}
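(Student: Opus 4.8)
\emph{Proof proposal.} The plan is to read both assertions directly off the identity \eqref{energy} established immediately above the statement, which already contains all the work. Write $c(t)=\exp\!\big(-\int_0^t\frac{\partial H}{\partial u}(z(s))\,ds\big)$, so that \eqref{energy} reads $H(z(t))=c(t)\,H(z)$ on $(a(z),b(z))$. The first step is simply to note that $c(t)>0$ for every $t$ in the maximal interval of existence, irrespective of the sign of $\frac{\partial H}{\partial u}$. Hence $H(z(t))$ has the same sign as $H(z)$ (and vanishes identically if $H(z)=0$) for all admissible $t$; in particular $H$ cannot change sign along an orbit, which is the first claim.

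For the quantitative bound I would take absolute values in \eqref{energy}, obtaining $|H(z(t))|=c(t)\,|H(z)|$, and then estimate the exponent of $c(t)$ using the monotonicity hypothesis. Under \textbf{(M$_-$)} one has $\frac{\partial H}{\partial u}(z(s))\geq\lb$ for every $s$, so for $t\geq0$ it follows that $\int_0^t\frac{\partial H}{\partial u}(z(s))\,ds\geq\lb t$, whence $-\int_0^t\frac{\partial H}{\partial u}(z(s))\,ds\leq-\lb t$ and therefore $c(t)\leq e^{-\lb t}$, giving $|H(z(t))|\leq e^{-\lb t}|H(z)|$ on $[0,b(z))$. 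The case \textbf{(M$_+$)} is the mirror image: here $\frac{\partial H}{\partial u}(z(s))\leq-\lb$, and for $t\leq0$ this yields $\int_0^t\frac{\partial H}{\partial u}(z(s))\,ds\geq-\lb t\geq0$, so the exponent of $c(t)$ satisfies $-\int_0^t\frac{\partial H}{\partial u}(z(s))\,ds\leq\lb t\leq 0$, and consequently $|H(z(t))|\leq e^{\lb t}|H(z)|$ on $(a(z),0]$.

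Honestly, there is no real obstacle in this proposition once \eqref{energy} is available: the genuine content is the computation of $\dot H$ along the flow from the explicit form \eqref{ch} of $X_H$ (already carried out in the excerpt), after which the statement is a one-line consequence of the positivity of $c(t)$ together with a trivial bound on $\int_0^t\frac{\partial H}{\partial u}$. The only point that deserves a moment's care is bookkeeping of signs in the \textbf{(M$_+$)} case, where the relevant time interval is $(a(z),0]$ and both $t$ and the exponent of $c(t)$ are non-positive, so that the inequalities run in the direction stated.
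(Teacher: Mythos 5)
Your proposal is correct and follows exactly the route the paper intends: the paper derives the identity \eqref{energy} and then states the proposition as an immediate consequence of the positivity of the exponential factor together with the bound $\frac{\partial H}{\partial u}\gtrless\pm\lb$ on the exponent, which is precisely your argument. The sign bookkeeping in the \textbf{(M$_+$)} case is handled correctly.
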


As an immediate consequence of the above proposition, we have
\begin{corollary}\label{pos-inv1}
Assume $H:T^{\ast}M\times\R\rightarrow\R$ satisfies \textbf{(M$_-$)} (resp. \textbf{(M$_+$)}), then for $\delta>0$, $H^{-1}(0)$ and $H^{-1}((-\infty,\delta))$ is forward (resp. backward) invariant under $\Phi^{t}_{H}$.
\end{corollary}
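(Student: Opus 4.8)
The plan is to deduce both invariance statements directly from Proposition \ref{Ly-1st} together with the identity \eqref{energy}, splitting into cases according to the sign of $H(z)$. I will spell out the argument for the \textbf{(M$_-$)} case; the \textbf{(M$_+$)} case is obtained verbatim after reversing time.

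For $H^{-1}(0)$: if $H(z)=0$, then \eqref{energy} gives $H(z(t))=e^{-\int_0^t\frac{\partial H}{\partial u}(z(s))\,ds}\cdot H(z)=0$ for every $t\in(a(z),b(z))$, so the entire orbit through $z$ remains in $H^{-1}(0)$; in particular it is forward invariant (indeed invariant).

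For $H^{-1}((-\infty,\delta))$ with $\delta>0$: fix $z$ with $H(z)<\delta$ and $t\in[0,b(z))$. Since the exponential factor in \eqref{energy} is positive, $H(z(t))$ has the same sign as $H(z)$; hence if $H(z)\le 0$ then $H(z(t))\le 0<\delta$. If instead $0<H(z)<\delta$, the first inequality of \eqref{Ly-1} gives $H(z(t))=|H(z(t))|\le e^{-\lb t}|H(z)|\le H(z)<\delta$, using $e^{-\lb t}\le 1$ for $t\ge 0$. In either case $H(z(t))<\delta$, which is precisely forward invariance of $H^{-1}((-\infty,\delta))$.

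The \textbf{(M$_+$)} version follows from the ``resp.'' statements of Proposition \ref{Ly-1st}: on $(a(z),0]$ the sign of $H$ is again preserved and $|H(z(t))|\le e^{\lb t}|H(z)|\le |H(z)|$, so the same dichotomy yields backward invariance. I do not expect a genuine obstacle here; the only subtlety worth flagging is that ``invariant'' is to be understood along the maximal interval of existence, since forward/backward completeness of $\Phi^t_H$ is established only later (Theorem A, \textbf{(A1)}), and the argument above never requires orbits to be defined for all time.
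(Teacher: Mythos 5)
Your proof is correct and follows essentially the same route as the paper: both arguments combine the sign-preservation statement and the decay inequality \eqref{Ly-1} from Proposition \ref{Ly-1st} (the paper phrases it as the invariance of $H^{-1}([e_-,e_+])$ for $e_-\le 0\le e_+$, while you split into cases by the sign of $H(z)$, which amounts to the same computation). Your closing remark that the invariance is understood along the maximal existence interval is a correct and worthwhile clarification.
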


\begin{proof}
Assume $H(z)\in[e_{-},e_{+}]$ for some $e_{-}\leq0\leq e_{+}$. Then using Proposition \ref{Ly-1st}, we have
\begin{align*}
e_{-}\leq e^{-\lb t}e_{-}\leq H(z(t))\leq e^{-\lb t}e_{+}\leq e_{+},\quad t\in[0,b(z)),\\
(\text{resp. }e_{-}\leq e^{\lb t}e_{-}\leq H(z(t))\leq e^{\lb t}e_{+}\leq e_{+},\quad t\in(a(z),0],)
\end{align*}
which implies that $z(t)\in H^{-1}([e_{-},e_{+}])$. This completes the proof.
\end{proof}

\vspace{5pt}
\subsection{HJ equation and second Lyapunov function}
~\\
It is known \cite[Chapter 1-2]{Ar2}, by the wave-particle duality, that \eqref{ch} is the characteristic system for some Hamilton-Jacobi PDE associated to $H$ so that the dynamics of $\Phi^t_H$ is closely related to the solution of the corresponding equation.

\vspace{5pt}
Assume $H:T^{\ast}M\times\R\rightarrow\R$ satisfies \textbf{(M$_-$)} (resp. \textbf{(M$_+$)}), we consider the Hamilton-Jacobi equation
\begin{align*}
H(x,\partial_x u,u)&=0,\quad x\in M \tag{HJ$_-$}\\
(\text{resp.}\quad H(x,-\partial_x u,-u)&=0,\quad x\in M \tag{HJ$_+$}).
\end{align*}
Notice that if $H(x,p,u)$ satisfies \textbf{(H1)-(H2)} and \textbf{(M$_+$)}, then $H(x,-p,-u)$ satisfies \textbf{(H1)-(H2)} and \textbf{(M$_-$)}. From the theory of Hamilton-Jacobi equations, we know that (HJ$_-$) (resp. (HJ$_+$)) admits a unique solution $u_-$ (resp. $-u_+)\in C(M,\R)$, which is in general not $C^{1}$ and should be understood in the viscosity sense. By the theory of viscosity solutions \cite{Ishii}, $u_\pm$ is Lipschitz continuous on $M$. Rademacher Theorem implies $du_\pm$ exists for almost every point on $M$. Thus for every $x\in M$, the set
\[
D^{\ast}u_\pm(x)=\{p\in T^\ast_x M: \exists \{x_k\}\subseteq M\setminus\{x\}, x=\lim_{k\rightarrow\infty}x_k, p=\lim_{k\rightarrow\infty}du_\pm(x_k)\}
\]
is non-empty and compact. A more detailed analysis \cite[Theorem 3.2.1,\,\, Proposition 5.3.1,\,\, Theorem 5.3.6]{CS} shows that

\begin{prop}\label{vis}
$u_-$ (resp. $u_+$) $:M\rightarrow\R$ is locally semiconcave (resp. semiconvex) and for every $x\in M$ and $p\in D^\ast u_\pm(x)$,
\[
H(x,p,u_\pm(x))=0.
\]
In particular, for every $\dot{x}\in T_xM$, the directional derivative
\[
\partial u_\pm(x,\dot{x})=\lim_{h\rightarrow0^+}\frac{u_\pm(x+h\dot{x})-u_\pm(x)}{h}
\]
exists and
\begin{equation}\label{dd}
\begin{split}
\partial u_-(x,\dot{x})=\min_{p\in D^{\ast}u_-(x)}\langle p,\dot{x}\rangle,\\
\partial u_+(x,\dot{x})=\max_{p\in D^{\ast}u_+(x)}\langle p,\dot{x}\rangle.
\end{split}
\end{equation}
\end{prop}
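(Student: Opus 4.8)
The plan is to reduce everything to the case of $u_-$ (the statement for $u_+$ follows by applying the $u_-$ result to the transformed Hamiltonian $\tilde H(x,p,u) = H(x,-p,-u)$, which satisfies \textbf{(H1)-(H2)} and \textbf{(M$_-$)} exactly as noted in the excerpt, and then observing $u_+ = -\tilde u_-$, which turns semiconcavity into semiconvexity, $D^\ast u_+(x) = -D^\ast \tilde u_-(x)$, and $\min$ into $\max$). So from now on I work only with $u_-$ and the equation (HJ$_-$).

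First I would establish local semiconcavity. The key structural input is that under \textbf{(H1)-(H2)}, the map $p \mapsto H(x,p,u)$ is convex and superlinear, hence (HJ$_-$) is a Hamilton--Jacobi equation of \emph{classical Tonelli type} in the $x$-variable once $u$ is frozen, and more importantly the viscosity solution $u_-$ is obtained as a value function (inf over admissible curves) of a calculus-of-variations / optimal-control problem with a smooth, strictly convex, coercive Lagrangian. The standard regularity theory for such value functions---precisely the references already cited, \cite[Theorem~3.2.1]{CS} for semiconcavity of value functions of Bolza problems and \cite[Section~5.3]{CS} for the viscosity-solution characterization---gives that $u_-$ is locally semiconcave on $M$ with linear modulus; I would simply invoke this, noting that the $u$-dependence of $H$ is $C^2$ and, by \textbf{(M$_-$)}, strictly monotone, so the fixed-point argument defining $u_-$ (contraction in the solution semigroup, or Perron's method) does not destroy the semiconcavity coming from the $p$-variable. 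On a closed manifold one patches local coordinate charts together; semiconcavity is a local notion so this is harmless.

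Next, the pointwise identity $H(x,p,u_-(x)) = 0$ for every $p \in D^\ast u_-(x)$. If $p = \lim_k du_-(x_k)$ with $x_k \to x$, then at each $x_k$ where $u_-$ is differentiable it is a viscosity solution, hence (by the almost-everywhere characterization, or directly since differentiability points give both sub- and super-differentials) $H(x_k, du_-(x_k), u_-(x_k)) = 0$. Passing to the limit $k \to \infty$ and using continuity of $H$ together with continuity of $u_-$ yields $H(x,p,u_-(x)) = 0$. (The nonemptiness and compactness of $D^\ast u_-(x)$ was already asserted in the paragraph preceding the proposition, via Rademacher plus the Lipschitz bound, so I would take it as given.)

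Finally, the directional-derivative formula \eqref{dd}. This is the part I expect to require the most care, though it is still standard for semiconcave functions. For a locally semiconcave function the superdifferential $D^+u_-(x)$ is nonempty, compact, convex, and equals the closed convex hull of the reachable gradients $D^\ast u_-(x)$ (this is \cite[Theorem~3.3.6]{CS}-type material, itself a consequence of semiconcavity); moreover semiconcavity guarantees that the one-sided directional derivative $\partial u_-(x,\dot x)$ exists for every $\dot x$ and satisfies $\partial u_-(x,\dot x) = \min_{p \in D^+u_-(x)} \langle p, \dot x\rangle$. Since a linear functional $p \mapsto \langle p, \dot x\rangle$ attains its minimum over a compact convex set at an extreme point, and the extreme points of $\mathrm{conv}\, D^\ast u_-(x)$ lie in $D^\ast u_-(x)$, the minimum over $D^+u_-(x)$ equals the minimum over $D^\ast u_-(x)$, giving the stated formula. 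The only genuine subtlety is making sure the "$\min$ over reachable gradients'' is legitimate, i.e.\ that one need not take the closed convex hull---this is exactly where compactness of $D^\ast u_-(x)$ and the extreme-point argument are used---and that the same chain of reasoning, with every inequality reversed and $\min$ replaced by $\max$, goes through for the semiconvex function $u_+$, which is why I set up the reduction in the first paragraph. With that reduction in hand, no separate argument for $u_+$ is needed.
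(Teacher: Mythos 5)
Your proposal is correct and follows essentially the same route as the paper, which proves Proposition~\ref{vis} simply by invoking the standard semiconcavity theory of Cannarsa--Sinestrari \cite[Theorem 3.2.1, Proposition 5.3.1, Theorem 5.3.6]{CS} together with the reduction of the \textbf{(M$_+$)} case to the \textbf{(M$_-$)} case via $\tilde H(x,p,u)=H(x,-p,-u)$ and $u_+=-\tilde u_-$. Your write-up merely fills in the connective details (the a.e.\ identity passed to the limit along reachable gradients, and the extreme-point argument identifying the minimum over $D^+u_-(x)=\mathrm{co}\,D^{\ast}u_-(x)$ with the minimum over $D^{\ast}u_-(x)$), all of which are accurate.
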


\begin{remark}
The function $u_-$ (resp. $u_+$) coincides with the backward (resp. forward) weak KAM solution for (HJ$_-$) defined in \cite{WWY3}.
\end{remark}

The next lemma is well-known in convex analysis, it is used to derive the second Lyapunov function for $\Phi^t_H$. The proof is omitted, interested reader may refer to \cite[Theorem 23.5]{R}.
\begin{lemma}\label{t1}
Assume $H: T^{\ast}M\times\R\rightarrow\R$ is a $C^1$ function satisfying \textbf{(H1)}, then
\[
\langle p,\dot{x}\rangle-H(x,p,u)=\sup_{p^{\prime}\in T^\ast_xM}\big\{\langle p^{\prime},\dot{x}\rangle-H(x,p^{\prime},u)\big\}
\]
if and only if $\dot{x}=\frac{\partial H}{\partial p}(x,p,u)$.
\end{lemma}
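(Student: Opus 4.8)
The statement to prove is Lemma~\ref{t1}, the standard fact from convex analysis identifying the Legendre transform with the condition $\dot x = \partial H/\partial p$. Since the paper explicitly says the proof is omitted and refers to Rockafellar, I will sketch the self-contained argument one would give.

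\medskip

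The plan is to fix $(x,u)$ and work with the single-variable (well, single-cotangent-fiber-variable) convex function $p'\mapsto H(x,p',u)$ on the finite-dimensional vector space $T^\ast_x M$. Assumption \textbf{(H1)} says its Hessian in $p'$ is positive definite everywhere, so this function is strictly convex and $C^1$ (indeed $C^2$); assumption \textbf{(H2)} says it is superlinear, hence its Legendre-Fenchel conjugate $L(x,\dot x,u) := \sup_{p'\in T^\ast_x M}\{\langle p',\dot x\rangle - H(x,p',u)\}$ is finite for every $\dot x\in T_x M$. Consider the function $g(p') := \langle p',\dot x\rangle - H(x,p',u)$. It is $C^1$ and strictly concave in $p'$, and by superlinearity $g(p')\to-\infty$ as $\|p'\|_x\to\infty$, so it attains its supremum at a unique point, characterized by the first-order condition $\nabla_{p'} g(p') = 0$, i.e. $\dot x - \frac{\partial H}{\partial p}(x,p',u) = 0$. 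That is exactly $\dot x = \frac{\partial H}{\partial p}(x,p',u)$.

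\medskip

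Concretely, the two directions go as follows. For the ``if'' direction: suppose $\dot x = \frac{\partial H}{\partial p}(x,p,u)$. Then $p$ is a critical point of the concave $C^1$ function $g$, and a critical point of a concave function is a global maximum; hence $g(p) = \sup_{p'} g(p')$, which is the displayed identity. For the ``only if'' direction: suppose $\langle p,\dot x\rangle - H(x,p,u) = \sup_{p'}\{\langle p',\dot x\rangle - H(x,p',u)\}$. Since, as noted, the supremum is attained (at a unique point by strict concavity), $p$ must be that maximizer, and interior maximizers of a differentiable function satisfy the vanishing-gradient condition $\nabla_{p'}g(p) = 0$, i.e. $\dot x = \frac{\partial H}{\partial p}(x,p,u)$.

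\medskip

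The only mildly delicate point — and the one worth spelling out rather than the trivial calculus — is the existence and uniqueness of the maximizer: existence needs the coercivity $g(p')\to-\infty$, which follows from \textbf{(H2)} (superlinearity of $H$ in $p$ beats the linear term $\langle p',\dot x\rangle$), and uniqueness needs strict concavity, which follows from \textbf{(H1)} (positive-definite Hessian in $p$). Once that is in hand, the equivalence is just the first-order optimality condition for a smooth concave function on a vector space, together with the elementary fact that for concave functions a stationary point is automatically a global maximum. I do not expect any real obstacle here; the content is entirely classical convex analysis, which is precisely why the paper defers it to \cite[Theorem 23.5]{R}.
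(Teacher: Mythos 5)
Your core argument --- a critical point of a concave $C^1$ function on a vector space is a global maximum, and conversely a global maximizer of a differentiable function on a vector space is a critical point --- is the right one, and it is exactly the convex-analysis content that the paper defers to \cite[Theorem 23.5]{R}; the paper gives no proof of its own, so there is no alternative route to compare against. However, the step you single out as ``the only mildly delicate point'' is wrong as stated: \textbf{(H2)} asserts only that $H(x,\cdot,u)$ is coercive ($H\rightarrow\infty$ as $\|p\|_x\rightarrow\infty$), not superlinear, so it does not follow that $g(p')=\langle p',\dot{x}\rangle-H(x,p',u)\rightarrow-\infty$, and the supremum need not be attained for every $\dot{x}$. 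The paper makes precisely this point in the remark immediately following the lemma. Moreover, the lemma's hypotheses include only \textbf{(H1)}, so \textbf{(H2)} should not be invoked at all.

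Fortunately, attainment of the supremum is not actually needed anywhere. In the ``if'' direction you use only that the given $p$ is a critical point of the concave function $g$, hence a global maximum; that is complete as written. In the ``only if'' direction, the hypothesis itself asserts that $\sup_{p'}g(p')=g(p)$ with $g(p)$ finite, i.e., the supremum \emph{is} attained at $p$ by assumption; you may then conclude $\nabla_{p'}g(p)=0$ directly, with no prior existence argument and no appeal to uniqueness of the maximizer. If you delete the coercivity/superlinearity discussion and replace ``since, as noted, the supremum is attained'' by ``since the hypothesis says the supremum is attained at $p$'', the proof is correct under \textbf{(H1)} alone, which is what the lemma requires.
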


\begin{remark}
The coercive assumption \textbf{(H2)} can not ensure that for every $(x,\dot{x})\in TM$,
\[
\sup_{p^{\prime}\in T^\ast_xM}\big\{\langle p^{\prime},\dot{x}\rangle-H(x,p^{\prime},u)\big\}
\]
is attained by some $p\in T^\ast_xM$.
\end{remark}

For $z=(x,p,u)$, assume $H:T^{\ast}M\times\R\rightarrow\R$ satisfies \textbf{(M$_-$)} (resp. \textbf{(M$_+$)}), let us define
\begin{equation}\label{def-Ly2}
F(z)=u_-(x)-u\quad (\text{resp. }F(z)=u-u_+(x)),
\end{equation}
then the following theorem shows that, besides $|H|, F$ serves as another Lyapunov function on $F^{-1}([0,+\infty))$.

\begin{theorem}[\textbf{Second Lyapunov function}]\label{Ly-2nd}
For every $z=(x,p,u)$ such that $F(z)\geq0$,
\begin{equation}\label{Ly-2}
\cL_{X_H}F(z)\leq -\lb F(z)\quad(\text{resp.}\quad\cL_{-X_H}F(z)\leq -\lb F(z)),
\end{equation}
with the left hand vanishes if and only if $u=u_-(x), p\in D^\ast u_-(x)$ (resp. $u=u_+(x), p\in D^\ast u_+(x)$) and for any $p^{\prime}\in D^\ast u_-(x)$ (resp. $p^{\prime}\in D^\ast u_+(x)$),
\[
\langle p-p^{\prime},\frac{\partial H}{\partial p}(x,p,u)\rangle\leq0.
\]
In particular, $F$ is monotone decreasing along forward (resp. backward) $\Phi^{t}_H$-orbit segments in $F^{-1}([0,+\infty))$.
\end{theorem}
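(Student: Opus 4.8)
The plan is to work with the case \textbf{(M$_-$)}, the other being obtained by the substitution $H(x,p,u)\mapsto H(x,-p,-u)$ as already noted. Since $u_-$ is only Lipschitz, the expression $\cL_{X_H}F(z)=\langle dF,X_H\rangle(z)$ is a priori not defined at points where $du_-$ fails to exist; so the first task is to make sense of the left-hand side of \eqref{Ly-2}. I would interpret $\cL_{X_H}F(z)$ as the derivative from the right of $t\mapsto F(\Phi^t_H(z))$ at $t=0$, i.e. $\frac{d^+}{dt}\big|_{t=0}\big(u_-(x(t))-u(t)\big)$; because $u_-$ admits one-sided directional derivatives everywhere (Proposition \ref{vis}) and $t\mapsto x(t)$ is $C^1$, the chain rule for directional derivatives gives
\[
\frac{d^+}{dt}\Big|_{t=0}u_-(x(t))=\partial u_-\big(x,\dot{x}(0)\big)=\min_{p'\in D^\ast u_-(x)}\Big\langle p',\frac{\partial H}{\partial p}(x,p,u)\Big\rangle ,
\]
using \eqref{dd} and $\dot{x}(0)=\frac{\partial H}{\partial p}(x,p,u)$ from \eqref{ch}. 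Thus
\[
\cL_{X_H}F(z)=\min_{p'\in D^\ast u_-(x)}\Big\langle p',\frac{\partial H}{\partial p}(x,p,u)\Big\rangle-\big(\langle p,\tfrac{\partial H}{\partial p}(x,p,u)\rangle-H(x,p,u)\big).
\]

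\textbf{Key estimate.} Now I would fix any $p'\in D^\ast u_-(x)$ realizing the minimum. By Proposition \ref{vis}, $H(x,p',u_-(x))=0$. I would like to compare this with the Legendre-type quantity attached to $p$. Write $\dot{x}=\frac{\partial H}{\partial p}(x,p,u)$. By Lemma \ref{t1}, $\langle p,\dot{x}\rangle-H(x,p,u)=\sup_{q\in T^\ast_xM}\{\langle q,\dot{x}\rangle-H(x,q,u)\}\ge \langle p',\dot{x}\rangle-H(x,p',u)$. Hence
\[
\cL_{X_H}F(z)=\langle p',\dot{x}\rangle-\big(\langle p,\dot{x}\rangle-H(x,p,u)\big)\le H(x,p',u).
\]
It remains to bound $H(x,p',u)$. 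Here I use monotonicity: since $\frac{\partial H}{\partial u}\ge\lb$ and $F(z)=u_-(x)-u\ge 0$, that is $u\le u_-(x)$, the fundamental theorem of calculus in the $u$-variable gives
\[
H(x,p',u)=H(x,p',u_-(x))-\int_{u}^{u_-(x)}\frac{\partial H}{\partial u}(x,p',s)\,ds\le 0-\lb\,(u_-(x)-u)=-\lb F(z),
\]
which is exactly \eqref{Ly-2}. The last assertion (monotone decrease of $F$ along forward orbits inside $F^{-1}([0,\infty))$) then follows by Corollary-type reasoning: the differential inequality $\frac{d^+}{dt}F(\Phi^t_Hz)\le-\lb F(\Phi^t_Hz)$ together with $F\ge 0$ forces $F(\Phi^t_Hz)\le e^{-\lb t}F(z)$ as long as the orbit stays in $F^{-1}([0,\infty))$ — and since $F$ can only decrease it never leaves that set, so $F^{-1}([0,\infty))$ is forward invariant and the bound is global on $[0,b(z))$.

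\textbf{The equality case.} For the characterization of when $\cL_{X_H}F(z)=0$, I would trace back the two inequalities used. Equality in $H(x,p',u)\le-\lb F(z)$ via the $u$-integral, given $\frac{\partial H}{\partial u}\ge\lb$, forces $F(z)=0$, i.e. $u=u_-(x)$, and then $H(x,p',u)=0$ with $p'\in D^\ast u_-(x)$. Equality in $\langle p',\dot x\rangle-H(x,p',u)\le\langle p,\dot x\rangle-H(x,p,u)=\sup_q\{\cdots\}$ means $p'$ also attains the supremum in Lemma \ref{t1}; but by that same lemma the maximizers are exactly the $q$ with $\frac{\partial H}{\partial q}(x,q,u)=\dot x=\frac{\partial H}{\partial p}(x,p,u)$, and strict convexity \textbf{(H1)} then gives $p'=p$, so $p\in D^\ast u_-(x)$ and $u=u_-(x)$. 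Conversely, if $u=u_-(x)$ and $p\in D^\ast u_-(x)$, then $F(z)=0$ and $H(x,p,u)=0$, so $\cL_{X_H}F(z)=\min_{p'}\langle p',\dot x\rangle-\langle p,\dot x\rangle=\min_{p'\in D^\ast u_-(x)}\langle p'-p,\dot x\rangle\le 0$, with equality precisely when $\langle p-p',\frac{\partial H}{\partial p}(x,p,u)\rangle\le 0$ for all $p'\in D^\ast u_-(x)$ (the minimum over the compact set is then $0$, attained at $p'=p$). This matches the stated condition.

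\textbf{Main obstacle.} The step requiring the most care is the rigorous justification of the chain rule $\frac{d^+}{dt}\big|_{0}u_-(x(t))=\partial u_-(x,\dot x(0))$ for the nonsmooth semiconcave function $u_-$ composed with a $C^1$ curve; one must check that the one-sided directional derivative computed along the straight segment $x+h\dot x(0)$ agrees with the limit along the curved trajectory $x(t)$, which uses Lipschitz continuity of $u_-$ together with $x(t)=x+t\dot x(0)+o(t)$. Everything else is a bookkeeping assembly of Lemma \ref{t1}, Proposition \ref{vis}, and the sign of $\frac{\partial H}{\partial u}$.
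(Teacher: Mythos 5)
Your proof follows essentially the same route as the paper's: the identity $\cL_{X_H}F(z)=\partial u_-(x,\dot x)-\dot u$, the bound via \eqref{dd}, the Fenchel-type inequality of Lemma \ref{t1}, the vanishing $H(x,p^{\prime},u_-(x))=0$ from Proposition \ref{vis}, and finally the monotonicity of $H$ in $u$; your discussion of how to interpret $\cL_{X_H}F$ at points where $u_-$ is not differentiable, and your two-directional treatment of the equality case, are if anything more careful than the paper's. Two small caveats, neither of which affects the stated conclusion: (i) your side claim that $F^{-1}([0,+\infty))$ is forward invariant is false --- at a point with $u=u_-(x)$ but $p\notin D^\ast u_-(x)$ one has $\cL_{X_H}F(z)<0$, so $F$ immediately becomes negative; what is actually forward invariant is $\cU=F^{-1}((-\infty,0])$ (Corollary \ref{pos-inv2}), and it is that fact which makes the exponential bound persist on all of $[0,b(z))$. (ii) In the equality case, equality in the $u$-integral step does not by itself force $F(z)=0$ (for $\partial H/\partial u\equiv\lb$, e.g.\ discounted systems, that step is always an equality); the correct deduction is simply $0=\cL_{X_H}F(z)\leq-\lb F(z)\leq0$.
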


\begin{proof}
Notice that the existence of direction derivative follows from Proposition \ref{vis}. In this proof, we set $\dot{x}=\langle dx,X_H\rangle, \dot{u}=\langle du,X_H\rangle$.

\vspace{5pt}
Assume $H$ satisfies \textbf{(M$_-$)} and $F(z)\geq0$. For any $p^{\prime}\in D^\ast u_-(x)$, we use \eqref{ch} to compute
\begin{align*}
&\cL_{X_H}F(z)=\partial u_-(x,\dot{x})-\dot{u}\\
\leq&\,\langle p^{\prime},\dot{x}\rangle-\dot{u}=\langle p^{\prime},\dot{x}\rangle-\big[\langle p,\dot{x}\rangle-H(x,p,u)\big]\\
\leq&\,\langle p^{\prime},\dot{x}\rangle-\big[\langle p^{\prime},\dot{x}\rangle-H(x,p^{\prime},u)\big]\\
=&\,H(x, p^{\prime}, u)-H(x, p^{\prime}, u_-(x))\\
\leq&\,\lb (u-u_-(x))=-\lb F(z),
\end{align*}
The third equality follows from Proposition \ref{vis}. The first inequality follows from \eqref{dd}, the second one uses Lemma \ref{t1} and the third one is due to \textbf{(M$_-$)}.

\vspace{5pt}
Assume $H$ satisfies \textbf{(M$_+$)} and $F(z)\geq0$. We use \eqref{ch} to compute
\begin{align*}
&\cL_{-X_H}F(z)=-\dot{u}-\partial u_+(x,-\dot{x})\\
=&\,-\dot{u}+\langle\bar{p},\dot{x}\rangle=\langle\bar{p},\dot{x}\rangle-\big[\langle p,\dot{x}\rangle-H(x,p,u)\big]\\
\leq&\,\langle\bar{p},\dot{x}\rangle-\big[\langle\bar{p},\dot{x}\rangle-H(x,\bar{p},u)\big]\\
=&\,H(x, \bar{p}, u)-H(x, \bar{p}, u_+(x))\\
\leq&\,\lb (u_+(x)-u)=-\lb F(z),
\end{align*}
where $\bar{p}\in D^\ast u_+(x)$ satisfies $\partial u_+(x,-\dot{x})=\langle\bar{p},-\dot{x}\rangle$. The fourth equality follow from Proposition \ref{vis}. The first inequality uses Lemma \ref{t1}, the second one is due to \textbf{(M$_+$)}.

\vspace{5pt}
Note that if the left hand of \eqref{Ly-2} vanishes, then every inequality in the above computation occurs to be an equality. Let $\bar{p}\in D^\ast u_\pm(x)$ satisfies
\[
\langle\bar{p},\dot{x}\rangle=\min_{p^\prime\in D^{\ast}u_\pm(x)}\langle p^\prime,\dot{x}\rangle.
\]
The inequality using Lemma \ref{t1} gives $p=\bar{p}$ and the last inequality becomes an equality if and only if $u=u_-(x)$ (resp. $u=u_+(x)$).
\end{proof}

From the above theorem, it is natural to define
\begin{align*}
\cU=&\,\{z\in T^{\ast}M\times\R:F(z)\leq 0\}\\
=&\,\{(x,p,u)\in T^{\ast}M\times\R:u\geq u_-(x)\}\\
(\text{resp. }=&\,\{(x,p,u)\in T^{\ast}M\times\R:u\leq u_+(x)\}),
\end{align*}
and for $\delta>0$,
\begin{align*}
\cU_{\delta}=&\,\{z\in T^{\ast}M\times\R:F(z)<\delta\}\\
=&\,\{(x,p,u)\in T^{\ast}M\times\R:u>u_-(x)-\delta\}\\
(\text{resp. }=&\,\{(x,p,u)\in T^{\ast}M\times\R:u<u_+(x)+\delta\}).
\end{align*}
The following corollary is a direct consequence of Theorem \ref{Ly-2nd}.
\begin{corollary}\label{pos-inv2}
Assume $H:T^{\ast}M\times\R\rightarrow\R$ satisfies \textbf{(M$_-$)} (resp. \textbf{(M$_+$)}), $\cU$ and $\cU_\delta,\,\,\delta>0$ are forward (resp. backward) invariant under $\Phi^{t}_{H}$.
\end{corollary}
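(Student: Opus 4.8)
The plan is to deduce the corollary directly from Theorem \ref{Ly-2nd} together with a continuity/absolute-continuity argument that converts the pointwise differential inequality into monotonicity of $F$ along orbits. I focus on the \textbf{(M$_-$)} case; the \textbf{(M$_+$)} case is identical after replacing $X_H$ by $-X_H$ and $u_-$ by $u_+$, or by using the symmetry $H(x,p,u)\mapsto H(x,-p,-u)$ already noted in the text.

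First I would observe that it suffices to show: if $F(z)\le 0$ then $F(\Phi^t_H(z))\le 0$ for all $t\in[0,b(z))$, and if $F(z)<\delta$ then $F(\Phi^t_H(z))<\delta$ for all $t\in[0,b(z))$. Fix $z$ and set $g(t)=F(\Phi^t_H(z))=u_-(x(t))-u(t)$. Since $u_-$ is locally Lipschitz (Proposition \ref{vis}) and $t\mapsto x(t)$ is $C^1$, the function $g$ is locally Lipschitz in $t$, hence absolutely continuous, and is differentiable at almost every $t$. At any $t$ where $g$ is differentiable one has $g'(t)=\partial u_-(x(t),\dot x(t)) - \dot u(t)$, because the directional derivative $\partial u_-(x,\dot x)$ from Proposition \ref{vis} controls the one-sided derivative of $t\mapsto u_-(x(t))$ along the curve from both sides once the two-sided derivative is known to exist — this is the one genuinely technical point, for which I would invoke that a Lipschitz function composed with a $C^1$ curve is differentiable a.e.\ with derivative given by the directional derivative wherever the composite derivative exists.

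With that in hand, the key step is the conditional differential inequality: whenever $g(t)\le 0$, i.e. $F(\Phi^t_H(z))\ge 0$ fails to hold — wait, more carefully, whenever $F(\Phi^t_H(z))\ge 0$ we are exactly in the hypothesis of Theorem \ref{Ly-2nd}, which gives $g'(t)=\cL_{X_H}F(\Phi^t_H(z))\le -\lb\, F(\Phi^t_H(z))=-\lb\, g(t)$ at a.e.\ such $t$. For the set $\cU_\delta$: suppose $g(0)<\delta$ but $g(t_1)\ge\delta$ for some $t_1\in(0,b(z))$; let $t_0=\inf\{t\in[0,t_1]: g(t)=\delta\}$, so $t_0>0$, $g(t_0)=\delta>0$ and $g(t)\le\delta$ on $[0,t_0]$; hence $g(t)\ge 0$ whenever $g(t)\in[0,\delta]$, so in particular on a left-neighborhood of $t_0$ we have $g(t)\in(0,\delta]$, thus $g\ge 0$ there and Theorem \ref{Ly-2nd} applies, giving $g'(t)\le -\lb g(t)<0$ a.e.\ on that neighborhood. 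Integrating (using absolute continuity of $g$) yields $g(t_0)<g(t)\le\delta$ for $t$ slightly less than $t_0$, contradicting $g(t_0)=\delta$. Hence $\cU_\delta$ is forward invariant. For $\cU=\{F\le 0\}$: if $g(0)\le 0$ and $g(t_1)>0$ for some $t_1$, pick $t_0=\sup\{t\le t_1: g(t)\le 0\}$, so $g(t_0)=0$ and $g(t)>0$ on $(t_0,t_1]$; on $(t_0,t_1]$ Theorem \ref{Ly-2nd} gives $g'\le -\lb g\le 0$ a.e., so $g$ is nonincreasing there and $g(t_1)\le g(t_0^+)=0$, a contradiction. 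Thus $\cU$ is forward invariant.

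I expect the main obstacle to be the justification of the almost-everywhere formula $g'(t)=\partial u_-(x(t),\dot x(t))-\dot u(t)$, and in particular that the a.e.-differential inequality plus absolute continuity of $g$ is enough to conclude monotonicity on the relevant subintervals — one must avoid mistakenly assuming $g$ is differentiable everywhere or that $\frac{d}{dt}u_-(x(t))$ exists everywhere. Once the absolute continuity of $g$ and the chain-rule-type identity are granted, the rest is the elementary barrier/ODE-comparison argument above, and no further estimate is needed since $\lb>0$ only makes the inequalities strict in the interior, which is all the argument uses. An alternative, slightly slicker route would be to note that on $F^{-1}([0,\infty))$ the function $e^{\lb t}F(\Phi^t_H(z))$ is nonincreasing by Theorem \ref{Ly-2nd}, and then patch together orbit segments, but the level-set barrier argument is cleaner for handling entry into and exit from $\{F\le 0\}$.
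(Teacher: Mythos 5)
Your proof is correct and follows essentially the same barrier/contradiction argument as the paper: locate the last time the orbit meets the boundary of the sublevel set and contradict the monotonicity of $F$ on $F^{-1}([0,+\infty))$. The only difference is that you re-derive that monotonicity from the a.e.\ differential inequality via absolute continuity of $t\mapsto F(\Phi^t_H(z))$, whereas the paper simply invokes the ``In particular'' clause of Theorem \ref{Ly-2nd}, which already asserts it.
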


\begin{proof}
We shall give the proof in case that $H$ satisfies \textbf{(M$_-$)}, the other part is completely similar. For every $z\in\cU$, set $f(t)=F\circ z(t), t\in[0, b(z))$. We argue by contradiction: assume that there is $z\in\cU$ and $T>0$, such that
\[
f(0)\leq0,\quad f(T)>0.
\]
By the continuity of the function $f(t)$, there is $\tau\in[0,T]$ such that
\[
f(\tau)=0,\quad f(t)\geq0\quad\text{ for}\,\,t\in[\tau,T].
\]
This contradicts with Theorem \ref{Ly-2nd} since $\tau<T$ and
\[
F\circ z(\tau)=f(\tau)=0<f(T)=F\circ z(T).
\]

Notice that if $z\in \cU_\delta$ and $z(T)\notin\cU_\delta$, then there is $\tau>0$ such that
\[
0<f(\tau)<\delta,\quad f(T)\geq\delta.
\]
To complete the proof, one use the same argument as above.
\end{proof}

\section{Proof of Theorem A}
For a monotone contact Hamiltonian $H$, it is found that most orbits of $\Phi^t_H$ goes forward (resp. backward) to some compact, $\Phi^t_H$-invariant set called maximal attractor (resp. repeller). This section is devoted to the investigation of maximal attractor (resp. repeller), it is divided into three sections corresponding to the conclusions in Theorem A.

\vspace{5pt}
In this section, $U_0\geq0$ denotes the $C^0$-norm of $u_-$ (resp. $u_+$) $\in C(M,\R)$ associated to the contact Hamiltonian $H$ satisfying \textbf{(M$_-$)} (resp. \textbf{(M$_+$)}).

\vspace{5pt}
\subsection{Forward or backward completeness}
~\\
To show the forward (resp. backward) completeness of the flow, the first task is to build compact forward (resp. backward) invariant sets. This is due to the following simple observation:

\begin{lemma}\label{t2}
Assume $H:T^{\ast}M\times\R\rightarrow\R$ satisfies \textbf{(M$_-$)} (resp. \textbf{(M$_+$)}), then for every $e,U\in\R$,
\begin{align*}
\cY(e,U)&:=\{(x,p,u)\in T^\ast M\times\R: H(x,p,u)\leq e, u\geq U\}\\
(\text{resp. }&:=\{(x,p,u)\in T^\ast M\times\R: H(x,p,u)\leq e, u\leq U\})
\end{align*}
is compact.
\end{lemma}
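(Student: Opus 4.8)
The plan is to show that $\cY(e,U)$ is a closed subset of a compact set in $T^\ast M \times \R$, which then gives compactness. Throughout I treat the case \textbf{(M$_-$)}; the other case is obtained by the substitution $u \mapsto -u$, $p \mapsto -p$, which swaps \textbf{(M$_-$)} and \textbf{(M$_+$)} as noted earlier in the paper.

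First I would establish that $\cY(e,U)$ is bounded in the $u$-direction. By construction every $(x,p,u) \in \cY(e,U)$ satisfies $u \geq U$, so I need an upper bound. Fix any point $(x,p,u)$ in the set. Since the fiber $p \mapsto H(x,p,u)$ attains its minimum by \textbf{(H2)} (coercivity) and continuity, and using \textbf{(M$_-$)} which forces $u \mapsto H(x,p,u)$ to be strictly increasing with derivative at least $\lb$, I get the estimate $H(x,p,u) \geq H(x,p,U) + \lb(u-U) \geq \min_{q \in T^\ast_x M} H(x,q,U) + \lb(u - U) \geq m_0 + \lb(u-U)$, where $m_0 := \min_{(x,q) \in T^\ast M,\, U \leq \cdot} $ — more precisely $m_0 := \min\{H(x,q,U) : (x,q) \in T^\ast M,\ \|q\|_x \leq R\}$ for a suitable $R$, but in fact it suffices to note $\inf_{(x,q)} H(x,q,U)$ is finite: by \textbf{(H2)} and compactness of $M$ the function $(x,q) \mapsto H(x,q,U)$ is bounded below on $T^\ast M$, call this bound $m_0$. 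Then $H(x,p,u) \leq e$ forces $u \leq U + (e - m_0)/\lb =: U_1$. So the $u$-coordinate ranges over the compact interval $[U, U_1]$.

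Next I would bound the $p$-direction. For $(x,p,u) \in \cY(e,U)$ we have $u \in [U, U_1]$ and $H(x,p,u) \leq e$. By \textbf{(M$_-$)} again, $H(x,p,u) \geq H(x,p,U_1) - \lb(U_1 - u) \geq H(x,p,U_1) - \lb(U_1-U)$; wait, I want a lower bound in terms of a $u$-independent quantity, so better: $H(x,p,u) \geq H(x,p,U)$ since $H$ is increasing in $u$ and $u \geq U$. Hence $H(x,p,U) \leq H(x,p,u) \leq e$. Now invoke \textbf{(H2)}: $\lim_{\|p\|_x \to \infty} H(x,p,U) = +\infty$ uniformly for $x$ in the compact manifold $M$ — uniformity follows from continuity of $H$ and compactness of $M$ (a standard argument: if not, there is a sequence $x_k \to x_*$ and $\|p_k\|_{x_k} \to \infty$ with $H(x_k,p_k,U)$ bounded, contradicting coercivity at $x_*$ via local trivialization). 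Therefore there is $R = R(e,U) > 0$ such that $H(x,p,U) \leq e$ implies $\|p\|_x \leq R$. Consequently $\cY(e,U)$ is contained in the set $\{(x,p,u) : x \in M,\ \|p\|_x \leq R,\ U \leq u \leq U_1\}$, which is compact since $M$ is compact, the closed disc bundle of radius $R$ is compact, and $[U,U_1]$ is compact.

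Finally, $\cY(e,U)$ is closed: it is the intersection of the closed sets $H^{-1}((-\infty,e])$ (closed by continuity of $H$) and $\{u \geq U\}$. A closed subset of a compact set is compact, so $\cY(e,U)$ is compact. The main obstacle is the uniformity in $x$ of the coercivity condition \textbf{(H2)}: the hypothesis is stated pointwise in $x$, so I need the compactness-of-$M$ argument to upgrade it to a uniform bound $R(e,U)$, and similarly to get the uniform lower bound $m_0$ for the fiberwise minimum. Everything else is a routine combination of monotonicity in $u$ and coercivity in $p$.
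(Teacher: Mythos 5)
Your overall route is the same as the paper's: show $\cY(e,U)$ is closed and bounded, getting the $p$-bound from coercivity of $H$ in $p$ uniformly over the compact base, and the $u$-bound from the monotonicity \textbf{(M$_-$)}. The paper does exactly this, quoting a uniform coercivity statement \textbf{(H2$^\prime$)} from its appendix for the $p$-bound and then bounding $u$ via $|u|\leq\frac{1}{\lb}|H(x,p,u)-H(x,p,0)|$; the only structural difference is that you bound $u$ first and $p$ second. You also correctly single out the uniformity in $x$ of \textbf{(H2)} as the one nontrivial point.

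However, the argument you offer for that uniformity does not work as stated. From a sequence $x_k\rightarrow x_*$, $\|p_k\|_{x_k}\rightarrow\infty$ with $H(x_k,p_k,U)\leq e$ you cannot ``contradict coercivity at $x_*$ via local trivialization'': the points $(x_k,p_k)$ never lie in the fibre over $x_*$ and they leave every compact subset of $T^{\ast}M$, so continuity of $H$ gives no comparison between $H(x_k,p_k,U)$ and $H(x_*,\cdot\,,U)$. Indeed, continuity plus fibrewise coercivity plus compactness of the base is \emph{not} sufficient: the function $h(x,p)=\min(|p-1/x|-1,\,|p|)$ for $x\in(0,1]$, $h(0,p)=|p|$, is continuous (continuity at $x=0$ only constrains $h$ on compact sets of $p$, where the first term blows up and drops out of the minimum), each $h(x,\cdot)$ is coercive, yet $\{h\leq-1\}$ contains the unbounded curve $p=1/x$. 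The same issue affects your claim that $m_0=\inf_{(x,q)}H(x,q,U)$ is finite. What rescues both claims is the standing convexity hypothesis \textbf{(H1)}: the sublevel set $\{H(x,\cdot\,,U)\leq e\}$ is convex, so once one arranges, on a neighborhood of each $x_*$ and then by a finite cover of $M$, that $H(x,\cdot\,,U)>e$ on a sphere $\|q\|_x=R$ enclosing the fibrewise minimizer, the sublevel set cannot contain any point outside that sphere (the segment from the minimizer to such a point would cross the sphere). This is precisely the content of the paper's \textbf{(H2$^\prime$)}, which the paper itself asserts rather tersely; citing it, or supplying the convexity argument above, closes the gap, and the rest of your proof is fine.
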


\begin{proof}
From the definition, $\cY(e,U)$ is closed and that by \textbf{(H2$^\prime$)} (see Appendix), for all $(x,p,u)\in\cY(e,U), \|p\|_x\leq P(e,U)$. Now the assumption \textbf{(M$_{\pm}$)} implies
\[
|u|\leq\frac{1}{\lb}|H(x,p,u)-H(x,p,0)|\leq\frac{1}{\lb}\bigg(e_+ +\max_{\|p\|_x\leq P}|H(x,p,0)|\bigg)\,.
\]
Thus $\cY(e,U)$ is also bounded, this completes the proof.
\end{proof}

\vspace{5pt}
We define
\[
\cY=H^{-1}(0)\cap\cU,
\]
and for $\delta>0$,
\begin{equation}\label{def-Y}
\cY_{\delta}=H^{-1}((-\infty,\delta))\cap\cU_{\delta}.
\end{equation}
Denote by $\overline{\cY_{\delta}}$ the closure of $\cY_{\delta}$, it follows directly from Lemma \ref{t2} that
\begin{theorem}\label{pos-inv}
Assume $H:T^{\ast}M\times\R\rightarrow\R$ satisfies \textbf{(M$_-$)} (resp. \textbf{(M$_+$)}), $\cY$ and $\overline{\cY_{\delta}}$ are compact and forward (resp. backward) invariant under $\Phi^{t}_{H}$.
\end{theorem}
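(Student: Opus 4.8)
The plan is to reduce Theorem \ref{pos-inv} to the two ingredients already in hand: the compactness statement from Lemma \ref{t2}, and the forward (resp. backward) invariance of the sublevel sets of the two Lyapunov functions $|H|$ and $F$ established in Corollary \ref{pos-inv1} and Corollary \ref{pos-inv2}. I will work under \textbf{(M$_-$)}; the \textbf{(M$_+$)} case follows by the symmetry $H(x,p,u)\mapsto H(x,-p,-u)$ noted before (HJ$_-$), which exchanges the two cases and which sends $\Phi^t_H$-orbits to $\Phi^{-t}$-orbits of the transformed Hamiltonian.

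First I would identify $\cY$ and $\overline{\cY_\delta}$ with sets of the form $\cY(e,U)$ so that Lemma \ref{t2} applies. For $\cY = H^{-1}(0)\cap\cU$: every point of $\cU$ satisfies $u\ge u_-(x)\ge -U_0$, so $\cY\subseteq\cY(0,-U_0)$, which is compact; since $\cY$ is closed (it is the intersection of the closed set $H^{-1}(0)$ with the closed set $\cU=F^{-1}((-\infty,0])$, $F$ being continuous because $u_-$ is Lipschitz), $\cY$ is compact. For $\overline{\cY_\delta}$ with $\cY_\delta = H^{-1}((-\infty,\delta))\cap\cU_\delta$: every point of $\cU_\delta$ has $u> u_-(x)-\delta\ge -U_0-\delta$, so $\cY_\delta\subseteq\cY(\delta,-U_0-\delta)$; the latter is compact, hence $\overline{\cY_\delta}\subseteq\cY(\delta,-U_0-\delta)$ is a closed subset of a compact set, so compact.

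Next, forward invariance. The set $\cY_\delta$ is the intersection $H^{-1}((-\infty,\delta))\cap\cU_\delta$ of a forward-invariant set (Corollary \ref{pos-inv1}) with a forward-invariant set (Corollary \ref{pos-inv2}), hence is itself forward invariant; then its closure $\overline{\cY_\delta}$ is forward invariant because $\Phi^t_H$ is continuous and, by the completeness part \textbf{(A1)} (or, to stay self-contained, because orbits starting in the compact set $\cY(\delta,-U_0-\delta)$ cannot escape it in finite time once we know that set is forward invariant — see the remark below), $\Phi^t_H$ is defined for all $t\ge0$ on $\overline{\cY_\delta}$: the continuous image $\Phi^t_H(\overline{\cY_\delta})$ lies in the closure of $\Phi^t_H(\cY_\delta)\subseteq\cY_\delta$. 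The same argument with $\delta=0$, using $\cU$ in place of $\cU_\delta$ and $H^{-1}(0)=H^{-1}((-\infty,0])\cap H^{-1}((-\infty,\delta))$ — more simply, $H^{-1}(0)$ is forward invariant directly by Proposition \ref{Ly-1st} (the sign and the bound $|H(z(t))|\le e^{-\lb t}|H(z)|$ force $H\equiv0$ along the orbit) — gives that $\cY$ is forward invariant, and $\cY$ is already closed so no closure step is needed.

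The one genuine subtlety, which I would flag as the main obstacle, is circularity with \textbf{(A1)}: to talk about $\Phi^t_H$ being ``forward invariant'' one needs the orbits to exist for all $t\ge0$, but forward completeness is itself proved (in the next subsection) precisely by exhibiting these compact forward-invariant sets. The clean way around this is to phrase the invariance for the maximal existence interval: for $z\in\cY$ (resp. $z\in\overline{\cY_\delta}$) and all $t\in[0,b(z))$ one has $\Phi^t_H(z)\in\cY$ (resp. $\in\overline{\cY_\delta}$) — this is exactly what Corollaries \ref{pos-inv1} and \ref{pos-inv2} and Proposition \ref{Ly-1st} give without any completeness input. Since $\cY$ and $\overline{\cY_\delta}$ are contained in a fixed compact set, the orbit segment $\{\Phi^t_H(z):t\in[0,b(z))\}$ stays in a compact set, and the standard escape-to-infinity criterion for ODEs forces $b(z)=+\infty$. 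Thus the compactness-plus-boundedness of $\cY,\overline{\cY_\delta}$ is what simultaneously yields forward completeness on these sets and upgrades ``invariant on the maximal interval'' to genuine forward invariance; this is presumably why the theorem is placed here and used immediately in the proof of \textbf{(A1)}. I would include one sentence making this logical order explicit so the reader does not perceive a gap.
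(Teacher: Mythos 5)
Your proposal is correct and follows essentially the same route as the paper: compactness via the inclusion of $\cY$ and $\overline{\cY_{\delta}}$ in a set of the form $\cY(e,U)$ together with Lemma \ref{t2}, and invariance from Corollary \ref{pos-inv1} and Corollary \ref{pos-inv2}. Your extra remarks on the closure step and on avoiding circularity with \textbf{(A1)} by phrasing invariance on the maximal existence interval $[0,b(z))$ are sound and, if anything, make explicit a point the paper leaves implicit.
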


\begin{proof}
By definition, \textbf{(H2)} and \textbf{(M$_{-}$)} (resp. \textbf{(M$_{+}$)}),
\[
\overline{\cY_{\delta}}=H^{-1}((-\infty,\delta])\cap F^{-1}((-\infty,\delta]),\quad\text{and}\quad \cY\subset\overline{\cY_{\delta}}
\]
are closed sets. It is easy to see that
\[
\overline{\cY_{\delta}}\subseteq\cY(\delta, -U_0-\delta)\quad\text{resp.}\quad\overline{\cY_{\delta}}\subseteq\cY(\delta, U_0+\delta).
\]
By Lemma \ref{t2}, being a closed subset of a compact set, $\cY_{\delta}$ and $\cY$ must be compact. The forward (resp. backward) invariance of $\cY_{\delta}$ and $\cY$ follows directly from Corollary \ref{pos-inv1} and \ref{pos-inv2}.
\end{proof}

\vspace{5pt}
An important corollary of the above proposition is

\vspace{5pt}
\textit{Proof of }\textbf{(A1)}:
Assume $H:T^{\ast}M\times\R\rightarrow\R$ satisfies \textbf{(M$_-$)} (resp. \textbf{(M$_+$)}), by Theorem \ref{Ly-2nd} and Corollary \ref{pos-inv2}, for every $z\in T^{\ast}M\times\R$, we have the following dichotomy:
\begin{itemize}
  \item there is $T\in[0,b(z))$ (resp. $T\in(a(z),0]$) such that $\Phi^t_H(z)\in\cU$ for $t\in[T,b(z))\,\,(\text{resp.}\,\,\,t\in(a(z),T]$.

  \vspace{5pt}
  \item $\{\Phi^t_H(z)\}_{t\in[0,b(z))}\subseteq\cU^c$, then it follows that
        \begin{equation}\label{ieq:1}
        \begin{split}
        0\leq F(\Phi^t_H(z))\leq e^{-\lb t}F(z)\leq F(z),\quad t\in[0,b(z))\\
        (\text{resp.}\quad 0\leq F(\Phi^t_H(z))\leq e^{\lb t}F(z)\leq F(z),\quad t\in(a(z),0]).
        \end{split}
        \end{equation}
\end{itemize}
Set $e_z=|H(z)|, U_z=-U_0-F(z)$ (resp. $U_z=U_0+F(z)$), combining Proposition \ref{Ly-1st} and the above, for every $z\in T^{\ast}M\times\R$,
\[
\Phi^t_H(z)\in\cY(e_z, U_z),\quad t\in[0,b(z))\,\,(\text{resp.}\,\,t\in(a(z),0]).
\]
Now we use Proposition \ref{t2} and Corollary \ref{ext} to see that $b(z)=\infty$ (resp. $a(z)=-\infty$) and the conclusion follows.\qed

\vspace{5pt}
Together with \eqref{Ly-1} and \eqref{ieq:1}, we obtain the by-product that for every $z\in T^{\ast}M\times\R$,
\begin{equation}\label{om-al}
\omega(z)\,\,(\text{resp.}\,\,\al(z))\in H^{-1}(0)\cap\cU=\cY
\end{equation}

\begin{remark}
It is equivalent to say, if $H$ satisfies \textbf{(M$_-$)} (resp. \textbf{(M$_+$)}), then $\Phi^t_H:T^{\ast}M\times\R\rightarrow T^{\ast}M\times\R$ is well-defined for any $t\geq0$ (resp. $t\leq0$).
\end{remark}

\vspace{5pt}
\subsection{Existence and location}
~\\
As the second step, we prove the existence of maximal attractor (resp. repeller) by locating the maximal compact, $\Phi^t_H$-invariant set. This fact serves as a stronger version of \eqref{om-al}.
\begin{theorem}\label{cpt-inv}
Any compact $\Phi^{t}_{H}$-invariant set is contained in $\cY$.
\end{theorem}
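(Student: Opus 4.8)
The plan is to show that any compact $\Phi^t_H$-invariant set $\cZ$ lies in $\cY = H^{-1}(0)\cap\cU$ by checking the two defining conditions separately. Work with the case \textbf{(M$_-$)}; the case \textbf{(M$_+$)} follows by the symmetry $H(x,p,u)\mapsto H(x,-p,-u)$ noted before (HJ$_-$). The key point is that on a compact invariant set the Lyapunov functions $|H|$ and $F$ of Proposition \ref{Ly-1st} and Theorem \ref{Ly-2nd} are bounded, while their exponential decay estimates force them to be identically zero where the estimates apply.

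First I would show $\cZ\subseteq H^{-1}(0)$. Since $\cZ$ is compact and invariant, $H$ is bounded on $\cZ$, say $|H|\leq C$ there. For $z\in\cZ$, invariance gives $\Phi^{-t}_H(z)\in\cZ$ for all $t\geq 0$, and applying \eqref{Ly-1} along the orbit segment from $\Phi^{-t}_H(z)$ to $z$ yields $|H(z)| = |H(\Phi^{t}_H(\Phi^{-t}_H(z)))| \leq e^{-\lb t}|H(\Phi^{-t}_H(z))| \leq e^{-\lb t}C$. Letting $t\to+\infty$ gives $H(z)=0$. (Here I use forward completeness from \textbf{(A1)} so that these orbit segments are well-defined, and backward invariance of $\cZ$ is automatic from $\Phi^t_H$-invariance for those $t$ for which $\Phi^t_H$ is defined on $\cZ$ — more carefully, invariance of a compact set means $\Phi^t_H(\cZ)=\cZ$ whenever defined, and one checks the orbit through any point of $\cZ$ stays in $\cZ$ and hence exists for all $t\in\R$.)

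Next I would show $\cZ\subseteq\cU$, i.e. $F\leq 0$ on $\cZ$. Suppose not: there is $z\in\cZ$ with $F(z)=\delta>0$. Consider the backward orbit $\Phi^{-t}_H(z)$, $t\geq 0$, which lies in $\cZ$. By Corollary \ref{pos-inv2}, $\cU_\delta$ is forward invariant, so $F<\delta$ is preserved forward in time; equivalently, if $\Phi^{-t}_H(z)$ ever entered $\{F<\delta\}$ it would stay there and could not reach $F(z)=\delta$. Hence $F(\Phi^{-t}_H(z))\geq\delta>0$ for all $t\geq 0$, so the whole backward orbit lies in $F^{-1}([0,+\infty))$ where the estimate of Theorem \ref{Ly-2nd} applies. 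Then \eqref{ieq:1} along the segment from $\Phi^{-t}_H(z)$ to $z$ gives $0<\delta = F(z) \leq e^{-\lb t}F(\Phi^{-t}_H(z))$, while $F$ is bounded on the compact set $\cZ$ by some $C'$, so $\delta\leq e^{-\lb t}C'\to 0$, a contradiction. Therefore $F(z)\leq 0$ for all $z\in\cZ$, which together with the previous step gives $\cZ\subseteq\cY$.

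The main obstacle I anticipate is purely a matter of care rather than depth: ensuring that the backward orbit of a point in a compact $\Phi^t_H$-invariant set is genuinely well-defined for all $t\leq 0$ and remains in the set, since \textbf{(A1)} only guarantees forward completeness under \textbf{(M$_-$)}. This should follow from the definition of invariance of a compact set together with the fact that a bounded orbit segment cannot blow up in finite time (the extension argument behind \textbf{(A1)}, cf. Corollary \ref{ext} in the appendix); alternatively one argues directly with forward orbits and the forward invariance statements of Corollaries \ref{pos-inv1} and \ref{pos-inv2} by noting that every point of $\cZ$ is the forward image of points of $\cZ$ arbitrarily far back in time. Once that technical point is settled, the argument is just two applications of "bounded Lyapunov function with exponential decay is zero."
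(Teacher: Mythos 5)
Your proof is correct and follows essentially the same route as the paper: both parts bound the Lyapunov functions $|H|$ and $F$ on the compact invariant set and let the exponential decay estimates along the (backward-complete, via Corollary \ref{ext}) orbit force them to vanish. Your extra step verifying that the backward orbit stays in $F^{-1}([0,+\infty))$ before invoking Theorem \ref{Ly-2nd} is a minor refinement of, not a departure from, the paper's argument.
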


\begin{proof}
Assume there is $z=(x,p,u)\in T^{\ast}M\times\R$ and a compact set $\cK\subset T^{\ast}M\times\R$ such that $\Phi^{t}_{H}(z)=z(t)=(x(t),p(t),u(t))\in\cK$ for all $t\in(a(z),b(z))$. We shall prove $z\in\cY$.

\vspace{5pt}
By Lemma \ref{exist}, it is easy to see that $(a(z),b(z))=\R$.

\vspace{5pt}
There is $e(\cK)>0$ such that $|H(z^{\prime})|\leq e(\cK)$ for any $z^{\prime}\in\cK$. It follows from Proposition \ref{Ly-1st} that for any $T>0$,
\begin{align*}
|H(z)|\leq&\, e^{-\lb T}|H(z(-T))|\leq e^{-\lb T}e(\cK),
\end{align*}
we send $T$ goes to infinity to find $z\in H^{-1}(0)$.

\vspace{5pt}
There is $U(\cK)>0$ such that $|F(z^{\prime})|\leq U(\cK)$ for any $z^{\prime}\in\cK$. Similarly, one uses Theorem \ref{Ly-2nd} to deduce that for any $T>0$
\[
F(z)\leq e^{-\lb T}F(z(-T))\leq e^{-\lb T}U(\cK),
\]
Sending $T$ goes to infinity to find $F(z)\leq0$ and $z\in\cU\cap H^{-1}(0)=\cY$.
\end{proof}

As the consequences of \textbf{(A1)} and Theorem \ref{cpt-inv} implies
\begin{theorem}\label{exist-Atr}
The maximal global attractor $\cA_H$ for $\Phi^t_H$ exists and
\[
\cA_H\subset\cY.
\]
Moreover, $\alpha(z)\neq\emptyset\,\,($resp. $\omega(z)\neq\emptyset)$ if and only if $z\in\cA_H$.
\end{theorem}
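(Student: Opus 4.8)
The plan is to prove Theorem \ref{exist-Atr} by assembling the pieces already in place. I focus on the \textbf{(M$_-$)} case; the \textbf{(M$_+$)} case follows by the symmetry $H(x,p,u)\mapsto H(x,-p,-u)$ noted before (HJ$_-$). First I would define the candidate attractor as the union of all compact $\Phi^t_H$-invariant subsets of $T^\ast M\times\R$, call it $\cA_H$, and immediately invoke Theorem \ref{cpt-inv} to conclude $\cA_H\subseteq\cY$. Since $\cY$ is compact (Theorem \ref{pos-inv}), $\cA_H$ is a bounded set; since it is a union of closed invariant sets its closure $\overline{\cA_H}$ is still invariant (by forward completeness from \textbf{(A1)} and continuity of $\Phi^t_H$, together with the fact that $a(z)=-\infty$ on any compact invariant set, cf. the argument in the proof of Theorem \ref{cpt-inv}), hence $\overline{\cA_H}\subseteq\cY$ is itself a compact invariant set and therefore equals $\cA_H$. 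So $\cA_H$ is compact and invariant.

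Next I would verify that $\cA_H$ is a global attractor in the sense of Definition \ref{mga}, i.e. $\omega(z)\subseteq\cA_H$ for every $z$. By \eqref{om-al} we already know $\omega(z)\subseteq\cY$, but I need the sharper statement that $\omega(z)\subseteq\cA_H$. The point is that $\omega(z)$, when nonempty, is a compact invariant set: nonemptiness together with invariance and closedness are standard once we know the forward orbit stays in the compact set $\cY(e_z,U_z)$ for $t$ large (established in the proof of \textbf{(A1)}), so $\omega(z)$ is a nonempty compact invariant set and hence $\omega(z)\subseteq\cA_H$ by the very definition of $\cA_H$. This also shows $\cA_H\neq\emptyset$ provided some $\omega(z)\neq\emptyset$; and in fact $\omega(z)\neq\emptyset$ for \emph{every} $z$ because the forward orbit is eventually trapped in the compact set $\cY$ (or $\cY(e_z,U_z)$), so $\cA_H$ is automatically nonempty. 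Maximality is then immediate: any global attractor $\cA'$ is in particular a compact invariant set, hence $\cA'\subseteq\cA_H$, so $\cA_H$ is the maximal element; uniqueness follows since two maximal elements would each contain the other.

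It remains to prove the equivalence $\alpha(z)\neq\emptyset\iff z\in\cA_H$ (stated for \textbf{(M$_-$)}). For the direction $z\in\cA_H\Rightarrow\alpha(z)\neq\emptyset$: if $z\in\cA_H$ then $z$ lies in some compact invariant set $\cK$, so the full orbit $\{\Phi^t_H(z):t\in\R\}\subseteq\cK$, and compactness of $\cK$ forces $\alpha(z)\neq\emptyset$. For the converse $\alpha(z)\neq\emptyset\Rightarrow z\in\cA_H$: if $\alpha(z)\neq\emptyset$ pick $w\in\alpha(z)$; then there are times $t_k\to-\infty$ with $\Phi^{t_k}_H(z)\to w$, so in particular the backward orbit of $z$ is well-defined for all those $t_k$, hence (by continuity of the flow and the group property) $a(z)=-\infty$ and the whole orbit $\{\Phi^t_H(z):t\in\R\}$ is defined. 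I would then show this whole orbit is bounded: by Proposition \ref{Ly-1st}, $|H(\Phi^t_H(z))|$ is bounded on $(-\infty,0]$ by $|H(z)|$ (it decays forward, and backward $|H|$ grows at most like $e^{\lb|t|}|H(z)|$ — wait, that is unbounded). The correct route is instead to note that $w\in\alpha(z)$ means $w$ lies in $\omega$ under the reversed flow; more cleanly, since $\Phi^{t_k}_H(z)\to w$ with $t_k\to-\infty$, the closure $C$ of the full orbit $\{\Phi^t_H(z):t\in\R\}$ together with $\alpha(z)$ need not a priori be compact, so the real argument is: $\alpha(z)$ is a closed invariant set, and I must check it is \emph{bounded}. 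Here is where the main obstacle lies. I would resolve it by observing that $H^{-1}(0)\cap\cU=\cY$ is forward invariant and that by \eqref{energy}–\eqref{Ly-1} and Theorem \ref{Ly-2nd} any point of $\alpha(z)$ must satisfy $H=0$ and $F\le 0$ (run the Lyapunov estimates backward from $\alpha(z)$ exactly as in the proof of Theorem \ref{cpt-inv}), so $\alpha(z)\subseteq\cY$; being closed and contained in the compact $\cY$, $\alpha(z)$ is compact, and being invariant it lies in $\cA_H$; finally, since $z=\lim_{k}\Phi^{-t_k}_H(\Phi^{t_k}_H(z))$ is obtained by flowing points of $\cA_H$ backward for finite time and $\cA_H$ is invariant, $z\in\cA_H$. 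Concretely the last step: fix $w\in\alpha(z)$, $w\in\cA_H$; for each $k$ we have $\Phi^{t_k}_H(z)$ close to $w$, but $\Phi^{t_k}_H(z)$ need not be in $\cA_H$ — instead I note $z=\Phi^{-t_k}_H(\Phi^{t_k}_H(z))$ and take $k\to\infty$ using continuity on compact time intervals, which does not immediately work since $-t_k\to\infty$. The clean fix is to argue directly that the \emph{entire} orbit through $z$ is precompact: its $\alpha$-limit set is the nonempty compact $\alpha(z)\subseteq\cY$ and its $\omega$-limit set is the nonempty compact $\omega(z)\subseteq\cY$, and a standard lemma (the orbit is contained in a compact set once both limit sets are compact and the orbit does not escape to infinity — which follows because $H$ and $F$ are bounded along it by the Lyapunov monotonicity, so it stays in some $\cY(e,U)$) shows the closed orbit-plus-limit-sets is a compact invariant set containing $z$, whence $z\in\cA_H$. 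The hard part is precisely this boundedness of the full (two-sided) orbit when $\alpha(z)\neq\emptyset$; once that is in hand, everything else is bookkeeping with the two Lyapunov functions.
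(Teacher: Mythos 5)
Your construction of $\cA_H$ as the closure of the union of all compact $\Phi^t_H$-invariant sets, the inclusion $\cA_H\subseteq\cY$ via Theorem \ref{cpt-inv}, the observation that $\omega(z)$ is a nonempty compact invariant set (hence $\subseteq\cA_H$) because the forward orbit is trapped in a compact $\cY(e_z,U_z)$, and the maximality argument are all correct and coincide with the paper's proof, as does the easy direction $z\in\cA_H\Rightarrow\alpha(z)\neq\emptyset$.

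The gap is exactly where you flag it: the precompactness of the full orbit when $\alpha(z)\neq\emptyset$. Your final justification --- ``$H$ and $F$ are bounded along it by the Lyapunov monotonicity'' --- is the very assertion you had just observed fails: backward in time, \eqref{energy} and \textbf{(M$_-$)} give $|H(z(t))|\geq e^{-\lb t}|H(z)|$, which blows up as $t\to-\infty$ unless $H(z)=0$, so monotonicity alone gives no backward bound. Your alternative route (``any point of $\alpha(z)$ satisfies $H=0$ and $F\leq0$, by running the estimates as in Theorem \ref{cpt-inv}'') is circular as stated: the proof of Theorem \ref{cpt-inv} presupposes that the orbit lies in a compact set, and $\alpha(z)$ is closed and invariant but not yet known to be bounded; moreover, even granting $\alpha(z)\subseteq\cY\subseteq\cA_H$, you correctly note this does not by itself place $z$ in $\cA_H$. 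The missing step --- which is the paper's actual argument --- is to apply the Lyapunov estimates to $z$ itself using the convergent subsequence: pick $t_k\to-\infty$ with $z(t_k)\to w$. Then $|H(z(t_k))|\geq e^{-\lb t_k}|H(z)|$ while the left-hand side converges to $|H(w)|$, forcing $H(z)=0$ and hence $H\equiv0$ along the whole orbit. Likewise, if $F(z(T))>0$ for some $T$, then Corollary \ref{pos-inv2} and Theorem \ref{Ly-2nd} give $F(z(t))\geq e^{\lb(T-t)}F(z(T))\to\infty$ as $t\to-\infty$, contradicting $F(z(t_k))\to F(w)$; so $F\leq0$ along the whole orbit. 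Thus the orbit lies in $\cY=H^{-1}(0)\cap\cU$, which is compact by Theorem \ref{pos-inv}, and its closure is a compact invariant set containing $z$, whence $z\in\cA_H$. With this inserted, your outline becomes essentially the paper's proof.
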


\begin{proof}
Let $\cA_H$ be the closure of the union of all compact $\Phi^t_H$-invariant sets. By Theorem \ref{cpt-inv}, $\cA_H$ is a closed subset of $\cY$, thus is compact. It is easy to verify, by continuity of $\Phi^t_H$, that for all $z\in\cA_H,\,\,(a(z),b(z))=\R$ and $\cA_H$ itself is $\Phi^t_H$-invariant.

\vspace{5pt}
Assume $H$ satisfies \textbf{(M$_-$)} (resp. \textbf{(M$_+$)}), for every $z\in T^\ast M\times\R$, by \eqref{om-al}, $\omega(z)$ (resp. $\al(z)$) is a compact $\Phi^t_H$-invariant set in $\cY$, thus a subset of $\cA_H$. So any neighborhood $\cO$ of $\cA_H$ is also a neighborhood of $\omega(z)$ (resp. $\al(z)$) and, by the definition of $\omega$-limit set (resp. $\al$-limit set), there is $T(z,\cO)>0$ (resp. $T(z,\cO)<0$) such that $\Phi^t_H(z)\in\cO$ for all $t\geq T$ (resp. $t\leq T$). Thus by Definition \ref{mga}, $\cA_H$ is a global attractor (resp. repeller) of $\Phi^t_H$ and its maximality is implied by the definition.

\vspace{5pt}
Assume $H$ satisfies \textbf{(M$_-$)} (resp. \textbf{(M$_+$)}) and $\al(z)\neq\emptyset$ (resp. $\om(z)\neq\emptyset$). From Lemma \ref{exist}, we conclude that $(a(z),b(z))=\R$. Using equation \eqref{energy}, it is clear that $z\in H^{-1}(0)$. Thus, by Theorem \ref{Ly-2nd},
\[
\lim_{t\rightarrow-\infty}F(z(t)) (\text{resp.}\,\,\lim_{t\rightarrow+\infty}F(z(t)))=\sup_{t\in\R}F(z(t))\quad\text{exists.}
\]
Now Lemma \ref{t2} ensures that $\overline{\text{Im}(z(t))}$ is a compact $\Phi^t_H$-invariant set and is contained in $\cA_H$. This completes the proof.
\end{proof}

\begin{remark}\label{graph}
Let $\pi:T^\ast M\times\R\rightarrow T^\ast M$ be the projection forgetting $u$. Since $\cA_H\subset\cY\subset H^{-1}(0)$, $\pi|_{\cA_H}$ is a homeomorphism onto its image.
\end{remark}

\vspace{5pt}
\subsection{Attractiveness and topological properties}
~\\
The aim of this section is to prove the remaining conclusion of \textbf{(A2)} and \textbf{(A3)}, thus complete the proof of Theorem A.

\vspace{5pt}
Assume $H$ satisfies \textbf{(M$_-$)} (resp. \textbf{(M$_+$)}), let us define
\begin{equation}\label{def-attractor}
\hat{\cA}_H=\bigcap_{t\geq0}\Phi^{t}_{H}\,(\overline{\cY_{\delta}})\quad(\text{resp.}\,\,\hat{\cA}_H=\bigcap_{t\leq0}\Phi^{t}_{H}\,(\overline{\cY_{\delta}})),
\end{equation}
then $\hat{\cA}_H$ is compact and by Theorem \ref{pos-inv}, for every $T\geq\tau>0$ (resp. $T\leq\tau<0$),
\begin{equation}\label{filt}
\begin{split}
\hat{\cA}_H\subseteq\bigcap_{t\in[0,T]}\Phi^{t}_{H}\,(\overline{\cY_{\delta}})=\Phi^{T}_{H}(\overline{\cY_{\delta}})\subseteq\Phi^{\tau}_{H}(\overline{\cY_{\delta}})\\
(\text{resp.   }\hat{\cA}_H\subseteq\bigcap_{t\in[T,0]}\Phi^{t}_{H}\,(\overline{\cY_{\delta}})=\Phi^{T}_{H}(\overline{\cY_{\delta}})\subseteq\Phi^{\tau}_{H}(\overline{\cY_{\delta}})).
\end{split}
\end{equation}
This directly implies that
\begin{theorem}\label{pro-attractor2}
$\hat{\cA}_H$ is the maximal compact $\Phi^t_H$-invariant set. In particular,
\[
\cA_H=\hat{\cA}_H.
\]
\end{theorem}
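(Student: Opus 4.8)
The plan is to show the two inclusions $\hat{\cA}_H\subseteq\cA_H$ and $\cA_H\subseteq\hat{\cA}_H$, after first checking that $\hat{\cA}_H$ is indeed a compact $\Phi^t_H$-invariant set; combined with Theorem \ref{cpt-inv} and Theorem \ref{exist-Atr} this will identify it as \emph{the} maximal one. I will carry out the argument for the case \textbf{(M$_-$)}; the \textbf{(M$_+$)} case is verbatim after reversing time. Note first that $\hat{\cA}_H$ does not depend on the choice of $\delta>0$: if $\delta_1<\delta_2$ then $\overline{\cY_{\delta_1}}\subseteq\overline{\cY_{\delta_2}}$ gives one inclusion, while Theorem \ref{Ly-2nd} and Proposition \ref{Ly-1st} (both $|H|$ and $F$ decay at rate $\lb$) show that any point of $\overline{\cY_{\delta_2}}$ enters $\overline{\cY_{\delta_1}}$ after a uniformly bounded time, so the two intersections in \eqref{def-attractor} agree; hence I may fix any $\delta>0$.

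\textbf{Invariance and compactness of $\hat{\cA}_H$.} Since $\overline{\cY_{\delta}}$ is compact and forward invariant (Theorem \ref{pos-inv}), each $\Phi^t_H(\overline{\cY_\delta})$ is compact, the family is nested and decreasing in $t$ by \eqref{filt}, so $\hat{\cA}_H$ is a nonempty compact set (it contains $\cY\neq\emptyset$, say). For invariance: if $z\in\hat{\cA}_H$ and $s\geq0$, then for every $t\geq0$ one has $z\in\Phi^{t+s}_H(\overline{\cY_\delta})$, hence $\Phi^s_H(z)\in\Phi^t_H(\overline{\cY_\delta})$, so $\Phi^s_H(z)\in\hat{\cA}_H$; this shows forward invariance. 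For backward invariance one must first note that every $z\in\hat{\cA}_H$ has a full orbit: since $z\in\Phi^t_H(\overline{\cY_\delta})$ for all $t\ge 0$, choose $w_t\in\overline{\cY_\delta}$ with $\Phi^t_H(w_t)=z$; the curve $s\mapsto\Phi^{s}_H(w_t)$ defined on $[-t,0]$ and extending $z$ backwards stays in the compact set $\overline{\cY_\delta}$, so by Corollary \ref{ext} (no finite-time blow-up on a compact set) the backward orbit of $z$ exists for all negative time and remains in $\overline{\cY_\delta}$. Then $\Phi^{-s}_H(z)\in\Phi^t_H(\overline{\cY_\delta})$ for all $t\geq 0$ follows the same way, giving $\Phi^{-s}_H(z)\in\hat{\cA}_H$. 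Thus $\hat{\cA}_H$ is a compact $\Phi^t_H$-invariant set, and by Theorem \ref{cpt-inv}, $\hat{\cA}_H\subseteq\cY$.

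\textbf{The inclusion $\hat{\cA}_H\subseteq\cA_H$} is now immediate: $\hat{\cA}_H$ is a compact invariant set, and $\cA_H$ is the closure of the union of all such sets (the construction in the proof of Theorem \ref{exist-Atr}), so $\hat{\cA}_H\subseteq\cA_H$. Moreover $\hat{\cA}_H$ is the \emph{maximal} compact invariant set, since \emph{any} compact invariant set $\cK$ is contained in $\cY\subseteq\overline{\cY_\delta}$ by Theorem \ref{cpt-inv}, hence $\cK=\Phi^t_H(\cK)\subseteq\Phi^t_H(\overline{\cY_\delta})$ for every $t\geq0$, so $\cK\subseteq\hat{\cA}_H$. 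In particular $\cA_H$, being a compact invariant set by Theorem \ref{exist-Atr}, satisfies $\cA_H\subseteq\hat{\cA}_H$, and combined with the previous inclusion we get $\cA_H=\hat{\cA}_H$.

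The only genuinely delicate point is the existence of the \emph{backward} orbit for points of $\hat{\cA}_H$ — one cannot invoke \textbf{(A1)} there, since that only gives forward completeness — so the step that leans on Corollary \ref{ext} together with forward invariance of the compact set $\overline{\cY_\delta}$ is where care is needed; everything else is a formal manipulation of nested images under the flow. It is also worth remarking that once $\cA_H=\hat{\cA}_H=\bigcap_{t\geq0}\Phi^t_H(\overline{\cY_\delta})$ is established, the attractiveness statement in \textbf{(A2)} and the neighborhood basis in \textbf{(A3)} become accessible, since the sets $\Phi^t_H(\overline{\cY_\delta})$ (or suitable sublevel sets of the Lyapunov functions) furnish the required filtration — but that is the business of the subsequent subsection, not of this theorem.
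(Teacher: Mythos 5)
Your proposal is correct and follows essentially the same route as the paper: establish that $\hat{\cA}_H$ is compact and $\Phi^t_H$-invariant (with the backward orbits of its points obtained from compactness of $\overline{\cY_{\delta}}$ together with the extension criterion), then deduce maximality from Theorem \ref{cpt-inv} via $\cK=\bigcap_{t\geq0}\Phi^t_H(\cK)\subseteq\bigcap_{t\geq0}\Phi^t_H(\overline{\cY_{\delta}})$. The only blemish is the parenthetical claim that $\hat{\cA}_H$ contains $\cY$ — this does not follow, since $\cY$ is merely forward invariant and need not lie in $\Phi^t_H(\overline{\cY_{\delta}})$ — but nonemptiness is not needed for the statement and in any case follows from the finite intersection property of the nested nonempty compacta $\Phi^t_H(\overline{\cY_{\delta}})$.
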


\begin{proof}
We shall only consider the case that $H$ satisfies \textbf{(M$_-$)}. For the other case, we only need to replace all $\geq$ by $\leq$ in the following argument.

\vspace{5pt}
\textit{$\hat{\cA}_H$ is $\Phi^t_H$-invariant:} for every $\tau\geq0$, by Theorem \ref{pos-inv},
\[
\Phi^\tau_H(\hat{\cA}_H)=\bigcap_{t\geq0}\Phi^{\tau+t}_{H}\,(\overline{\cY_{\delta}})=\bigcap_{t\geq0}\Phi^{t}_{H}\bigg(\Phi^{\tau}_{H}(\overline{\cY_{\delta}})\bigg)\subseteq\bigcap_{t\geq0}\Phi^{t}_{H}(\overline{\cY_{\delta}})=\hat{\cA}_H,
\]
where the first equality holds since $\Phi^\tau_H$ is injective. By the definition of $\hat{\cA}_H$,
\[
\hat{\cA}_H\subseteq\bigcap_{t\geq\tau}\Phi^{t}_{H}\,(\overline{\cY_{\delta}})=\bigcap_{t\geq0}\Phi^{\tau+t}_{H}\,(\overline{\cY_{\delta}})=\Phi^\tau_H\bigg(\bigcap_{t\geq0}\Phi^{t}_{H}\,(\overline{\cY_{\delta}})\bigg)=\Phi^\tau_H(\hat{\cA}_H).
\]
Therefore we obtain
\begin{equation}\label{inv}
\Phi^{\tau}_{H}(\hat{\cA}_H)=\hat{\cA}_H,\quad\text{for}\quad\tau\geq0.
\end{equation}
Since $\hat{\cA}_H$ is compact, Lemma \ref{exist} and \eqref{inv} show that for every $z\in\hat{\cA}_H$,
\[
(a(z), b(z))=\R\quad\text{and}\quad\Phi^{\tau}_{H}(\hat{\cA}_H)=\hat{\cA}_H,\quad\text{for}\quad\tau\in\R.
\]

\vspace{5pt}
\textit{$\hat{\cA}_H$ is maximal:} assume $\cK$ is a compact $\Phi^t_H$-invariant set. From Theorem \ref{cpt-inv} and definition of $\overline{\cY_{\delta}}$,
\[
\cK\subseteq\cY\subseteq\overline{\cY_{\delta}}.
\]
The above relation and $\Phi^t_H$-invariance of $\cK$ give
\[
\cK=\bigcap_{t\geq0}\Phi^{t}_{H}\,(\cK)\subseteq\bigcap_{t\geq0}\Phi^{t}_{H}\,(\overline{\cY_{\delta}})=\hat{\cA}_H.
\]
Combining the above discussions, Theorem \ref{exist-Atr} and the compactness of $\hat{\cA}_H$, we have $\cA_H=\hat{\cA}_H$.
\end{proof}

\begin{remark}\label{flexible-Atr}
Assume $H$ satisfies \textbf{(M$_-$)} (resp. \textbf{(M$_+$)}), by repeating the proof above, it is clear that
\[
\cA_H=\bigcap_{t\geq0}\Phi^{t}_{H}\,(\cK)\quad(\text{resp.}\,\,\cA_H=\bigcap_{t\leq0}\Phi^{t}_{H}\,(\cK)),
\]
$\cK$ is any compact, forward (resp. backward) invariant set containing $\cY$.
\end{remark}

The following lemma shows that $\Phi^{T}_{H}(\overline{\cY_{\delta}})$ is a good approximation of $\cA_H$ (in the sense of topology) when $T>0$ is large enough.

\begin{lemma}\label{ap-Atr}
For every open neighborhood $\cO$ of $\cA_H$, there is $T(\overline{\cY_{\delta}}, \cO)>0$ such that $\Phi^{T}_{H}(\overline{\cY_{\delta}})\subseteq\cO$.
\end{lemma}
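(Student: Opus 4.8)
The plan is to run a routine Cantor-intersection (finite-intersection-property) argument on the nested family $\{\Phi^{T}_{H}(\overline{\cY_{\delta}})\}_{T\geq0}$. I treat the case \textbf{(M$_-$)}; the case \textbf{(M$_+$)} follows by reversing time, exactly as in the proof of Theorem \ref{pro-attractor2}.

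First I would collect the three ingredients. By \textbf{(A1)} the flow is forward complete, so for each $T\geq0$ the map $\Phi^{T}_{H}$ is a continuous self-map of $T^{\ast}M\times\R$; since $\overline{\cY_{\delta}}$ is compact (Theorem \ref{pos-inv}), each image $\Phi^{T}_{H}(\overline{\cY_{\delta}})$ is compact. By \eqref{filt} (together with the forward invariance of $\overline{\cY_{\delta}}$) the family is nonincreasing: $\Phi^{T}_{H}(\overline{\cY_{\delta}})\subseteq\Phi^{\tau}_{H}(\overline{\cY_{\delta}})$ whenever $T\geq\tau\geq0$. Finally, by the definition \eqref{def-attractor} of $\hat{\cA}_H$ and Theorem \ref{pro-attractor2}, $\bigcap_{T\geq0}\Phi^{T}_{H}(\overline{\cY_{\delta}})=\hat{\cA}_H=\cA_H\subseteq\cO$.

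Then I would set $\cK_T:=\Phi^{T}_{H}(\overline{\cY_{\delta}})\cap\cO^{c}$. Since $\cO$ is open, each $\cK_T$ is a closed subset of the compact set $\Phi^{T}_{H}(\overline{\cY_{\delta}})$, hence compact; the family $\{\cK_T\}_{T\geq0}$ is nonincreasing; and $\bigcap_{T\geq0}\cK_T=\cA_H\cap\cO^{c}=\emptyset$. Fix any sequence $T_n\uparrow+\infty$, so that $\bigcap_{n}\cK_{T_n}=\emptyset$ by monotonicity. If every $\cK_{T_n}$ were nonempty, then picking $z_n\in\cK_{T_n}\subseteq\cK_{T_1}$ and extracting a convergent subsequence (using compactness of $\cK_{T_1}$ and the nesting) would produce a point lying in all $\cK_{T_n}$, contradicting the emptiness of the intersection. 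Hence $\cK_{T_n}=\emptyset$ for some $n$, i.e. $\Phi^{T_n}_{H}(\overline{\cY_{\delta}})\subseteq\cO$, and we take $T(\overline{\cY_{\delta}},\cO):=T_n$.

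There is no genuine obstacle here; the argument is elementary point-set topology. The two points that need a little care are invoking \textbf{(A1)} so that $\Phi^{T}_{H}$ is globally defined and continuous (guaranteeing the images are truly compact, not merely the continuous images of an incompletely-flowed set), and passing from the continuum-indexed family to a sequence before applying the nested-compact-sets argument; it is precisely the monotonicity coming from \eqref{filt} that upgrades "some finite subfamily" to "a single $T$".
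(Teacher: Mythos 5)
Your proposal is correct and is essentially the paper's own argument: the paper also fixes a sequence $T_n\to\infty$, picks $z_n\in\Phi^{T_n}_{H}(\overline{\cY_{\delta}})\setminus\cO$, extracts a convergent subsequence in the compact set $\overline{\cY_{\delta}}\setminus\cO$, and shows the limit lies in $\bigcap_{t\geq0}\Phi^{t}_{H}(\overline{\cY_{\delta}})=\cA_H$, contradicting $\cA_H\subseteq\cO$. Your reformulation via the nested compact sets $\cK_T=\Phi^{T}_{H}(\overline{\cY_{\delta}})\cap\cO^{c}$ is just a repackaging of the same compactness argument, with the same ingredients (\textbf{(A1)}, Theorem \ref{pos-inv}, \eqref{filt}, Theorem \ref{pro-attractor2}).
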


\begin{proof}
We assume, contrary to the conclusion, that there exist $T_n>0$ and $z_n\in\Phi^{T_n}_{H}(\overline{\cY_{\delta}})$ with
\begin{equation}\label{ctr1}
\lim_{n\rightarrow\infty}T_n=\infty,\quad z_{n}\notin\cO.
\end{equation}
Since $z_n\in\overline{\cY_{\delta}}\setminus\cO$, which is clearly compact, then
\begin{equation}\label{ctr2}
z_n\rightarrow z^\ast\in\overline{\cY_{\delta}}\setminus\cO.
\end{equation}
Now by \eqref{ctr1}, for every $t\geq0$, there is $N\in\N$ such that if $n\geq N$, then $T_n\geq t$ and $z_n\in\Phi^{T_n}_{H}(\overline{\cY_{\delta}})\subseteq\Phi^{t}_{H}(\overline{\cY_{\delta}})$. It follows from compactness of $\Phi^{t}_{H}(\overline{\cY_{\delta}})$ that $z^\ast\in\Phi^{t}_{H}(\overline{\cY_{\delta}})$ for every $t\geq0$, then
\[
z^\ast\in\bigcap_{t\geq0}\Phi^{t}_{H}\,(\overline{\cY_{\delta}})=\cA_H,
\]
which contradicts \eqref{ctr2}.
\end{proof}

Now we are ready to complete the

\vspace{5pt}
\textit{Proof of} \textbf{(A2)}:
The existence of maximal attractor (resp. repeller) is settled by Theorem \ref{exist-Atr}. We turn to the second part of the conclusion.

\vspace{5pt}
Fixing $\delta>0$, for any compact set $\cK\subset T^\ast M\times\R$, we define
\[
T_1(\cK):=\max\{-\frac{1}{\lb}\ln(\frac{\delta}{\max_{z\in\cK}|H|(z)}), -\frac{1}{\lb}\ln(\frac{\delta}{\max_{z\in\cK}F(z)})\},
\]
notice that $T_1\leq0$ if and only if $\cK\subseteq\cY_\delta$. By Proposition \ref{Ly-1st} and \ref{Ly-2nd},
\begin{equation}\label{r1}
\Phi^{t}_{H}\,(\cK)\subset\cY_\delta,\quad\text{for}\quad t>T_1.
\end{equation}
We use Lemma \ref{ap-Atr} to obtain $T_2(\cO,\overline{\cY_{\delta}})>0$ such that
\begin{equation}\label{r2}
\Phi^{t}_{H}\,(\cY_\delta)\subset\cO,\quad\text{for}\quad t>T_2.
\end{equation}
By taking $T=T_1+T_2+1$ and using \eqref{r1}, \eqref{r2} above, the conclusion follows.\qed

\vspace{5pt}
To verify the conclusions of \textbf{(A3)}, it is necessary to show the
\begin{lemma}\label{connect}
$\cY_{\delta}$ is homotopic equivalent to $M$. In particular, $\cY_{\delta}$ is path-connected.
\end{lemma}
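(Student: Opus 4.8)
The plan is to exhibit an explicit deformation retraction of $\cY_\delta$ onto the zero section $M\times\{0\}\subset T^\ast M\times\R$ (embedded appropriately inside $\cY_\delta$), by collapsing the fiber variable $p$ toward $0$ and simultaneously sliding $u$ to a suitable value. Recall from \eqref{def-Y} that, in the \textbf{(M$_-$)} case,
\[
\cY_\delta=\{(x,p,u):H(x,p,u)<\delta,\ u>u_-(x)-\delta\}.
\]
First I would observe that, by \textbf{(H3)} together with convexity \textbf{(H1)}, for each fixed $(x,u)$ the function $p\mapsto H(x,p,u)$ attains its minimum at $p=0$, so $H(x,0,u)\le H(x,p,u)$; moreover by \textbf{(M$_-$)} the map $u\mapsto H(x,0,u)$ is strictly increasing with derivative $\ge\lb$, hence a homeomorphism of $\R$ onto $\R$. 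This gives, for each $x\in M$, a unique value $u=\psi_\delta(x)$ solving $H(x,0,\psi_\delta(x))=\delta/2$ (say), and $\psi_\delta$ is continuous (indeed as smooth as $H$) in $x$ by the implicit function theorem. One checks $(x,0,\psi_\delta(x))\in\cY_\delta$: clearly $H=\delta/2<\delta$, and $u_-(x)-\delta<\psi_\delta(x)$ follows because $H(x,0,u_-(x))\le H(x,p,u_-(x))=0$ for $p\in D^\ast u_-(x)$ (Proposition \ref{vis}), so $\psi_\delta(x)>u_-(x)$ by monotonicity.

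Next I would build the retraction in two stages, composed into a single homotopy $G:[0,1]\times\cY_\delta\to\cY_\delta$. Stage one, over $s\in[0,1/2]$, linearly scales the momentum: $(x,p,u)\mapsto(x,(1-2s)p,u)$. This stays in $\cY_\delta$ because $H(x,tp,u)$ is, by convexity in $p$ and \textbf{(H3)}, monotone nondecreasing in $t\in[0,1]$ — so $H(x,(1-2s)p,u)\le H(x,p,u)<\delta$ — while the $u$-coordinate and $x$ are untouched, so the condition $u>u_-(x)-\delta$ persists. At $s=1/2$ we have reached the set $\{(x,0,u):u>u_-(x)-\delta,\ H(x,0,u)<\delta\}$, which fibers over $M$ with fibers that are open intervals $(\,u_-(x)-\delta,\ \chi_\delta(x)\,)$ where $\chi_\delta(x)$ is defined by $H(x,0,\chi_\delta(x))=\delta$ — again well-defined and continuous by the strict monotonicity in $u$. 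Stage two, over $s\in[1/2,1]$, linearly slides $u$ within its (contractible) fiber toward $\psi_\delta(x)$: $u\mapsto (2-2s)u+(2s-1)\psi_\delta(x)$, which remains in the interval because intervals are convex and $\psi_\delta(x)$ lies strictly inside. The endpoint $G(1,\cdot)$ is the map $(x,p,u)\mapsto(x,0,\psi_\delta(x))$ onto the graph of $\psi_\delta$, and $G(0,\cdot)=\mathrm{id}$; continuity of $G$ is clear since $\psi_\delta,u_-$ are continuous. This graph $\{(x,0,\psi_\delta(x)):x\in M\}$ is homeomorphic to $M$ via $\pi$ (cf. Remark \ref{graph}), and the inclusion of the graph into $\cY_\delta$ composed with $G(1,\cdot)$ is $G(1,\cdot)$ itself while $G$ gives a homotopy to the identity; hence $\cY_\delta\simeq M$. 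The \textbf{(M$_+$)} case is identical after the substitution $(p,u)\mapsto(-p,-u)$, which as noted in the text converts \textbf{(M$_+$)} into \textbf{(M$_-$)}.

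The main obstacle I anticipate is verifying that stage one genuinely keeps us inside $\cY_\delta$ — specifically the claim that $t\mapsto H(x,tp,u)$ is nondecreasing on $[0,1]$. This needs the combination of \textbf{(H1)} (convexity of $p\mapsto H$) and \textbf{(H3)} ($\partial_pH(x,0,u)=0$): convexity gives $\frac{d}{dt}H(x,tp,u)=\langle\partial_pH(x,tp,u),p\rangle$ nondecreasing in $t$, and at $t=0$ this derivative is $\langle\partial_pH(x,0,u),p\rangle=0$ by \textbf{(H3)}, so the derivative is $\ge 0$ throughout $[0,1]$. Without \textbf{(H3)} this fails, which is exactly why the hypothesis is invoked here. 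A secondary point requiring a little care is the continuity and well-definedness of $\psi_\delta$ and $\chi_\delta$: this is where strict monotonicity from \textbf{(M$_-$)} (the $\lb$-bound, not merely $\partial_uH\ge0$) and the implicit function theorem enter, and one should note these functions are globally defined on all of $M$ precisely because $u\mapsto H(x,0,u)$ is a bijection of $\R$. Everything else is routine bookkeeping with the explicit formulas for $G$.
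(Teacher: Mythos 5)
Your overall strategy --- a two-stage deformation retraction that first collapses the momentum variable onto the fiberwise minimizer of $H$ and then slides $u$ along the (convex) fiber onto a section homeomorphic to $M$ --- is exactly the paper's strategy. The problem is that you invoke \textbf{(H3)} at the two load-bearing points: (i) the monotonicity of $t\mapsto H(x,tp,u)$ on $[0,1]$, which you correctly note requires $\partial_pH(x,0,u)=0$, and (ii) the verification that $\psi_\delta(x)>u_-(x)$, which uses $H(x,0,u_-(x))\le H(x,p,u_-(x))$ for $p\in D^\ast u_-(x)$, again only true if $0$ minimizes $p\mapsto H(x,p,u_-(x))$. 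But Lemma \ref{connect} lives in Section 3 and feeds into \textbf{(A3)} of Theorem A, whose hypotheses are only \textbf{(H1)-(H2)} and \textbf{(M$_\pm$)}; \textbf{(H3)} is an extra assumption introduced later for Theorem B. So as written your argument does not prove the lemma in the generality in which it is used: without \textbf{(H3)} the minimum of $p\mapsto H(x,p,u)$ sits at some $P_\ast(x,u)\neq 0$, and scaling $p$ linearly toward $0$ can increase $H$ and exit $\cY_\delta$.

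The repair is small and is precisely what the paper does. By \textbf{(H1)-(H2)} the strictly convex, coercive function $p\mapsto H(x,p,u)$ has a unique minimizer $P_\ast(x,u)$, which is $C^1$ in $(x,u)$ by the implicit function theorem applied to $\partial_pH=0$. Replace your stage one by $(x,p,u)\mapsto(x,(1-2s)p+2sP_\ast(x,u),u)$; convexity alone gives $H(x,(1-2s)p+2sP_\ast(x,u),u)\le\max\{H(x,p,u),H(x,P_\ast(x,u),u)\}=H(x,p,u)<\delta$, with $u$ untouched. In stage two you must also carry the momentum along as $P_\ast(x,U(x,t))$ while you slide $u$ (the paper retracts onto the section $\{(x,P_\ast(x,u_-(x)),u_-(x))\}$ rather than onto an interior level $H=\delta/2$, but your choice of target is equally fine once $0$ is replaced by $P_\ast$, since $H(x,P_\ast(x,u_-(x)),u_-(x))\le H(x,p,u_-(x))=0$ for $p\in D^\ast u_-(x)$ restores your inequality $\psi_\delta(x)>u_-(x)$). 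With that substitution your bookkeeping for membership in $\cY_\delta$, the continuity of the homotopy, and the reduction of the \textbf{(M$_+$)} case via $(p,u)\mapsto(-p,-u)$ all go through.
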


\begin{proof}
For every $(x,u)\in M\times\R$, by \textbf{(H1)-(H2)}, there is a $C^1$ map $(x,u)\mapsto P_\ast(x,u)\in T^\ast_xM$ satisfying
\[
\frac{\partial H}{\partial p}(x, P_\ast(x,u),u)=0.
\]

Assume $H$ satisfies \textbf{(M$_-$)} (resp. \textbf{(M$_+$)}), set
\[
\Sigma_{\pm}:=\{(x,P_{\ast}(x,u_{\pm}(x)),u_{\pm}(x)):x\in M\}\subset T^\ast M\times\R,
\]
then it is clear that
\begin{itemize}
  \item $H|_{\Sigma_{\pm}}\leq0$, thus $\Sigma_{\pm}\subset\cY_{\delta}$,
  \item $\Sigma_{\pm}$ is homeomorphic to $M$.
\end{itemize}

Now for $z=(x,p,u), t\in[0,1]$, define
\[
U_{\pm}(x,t)=(1-t)u+tu_{\pm}(x)
\]
and continuous maps
\begin{align*}
G_1(z,t)&\,=(x,(1-t)p+tP_\ast(x,u),u),\\
G_{2,\pm}(z,t)&\,=(x,P_{\ast}(x,U_{\pm}(x,t)),U_{\pm}(x,t)),\quad(z,t)\in\cY_{\delta}\times[0,1].
\end{align*}

\vspace{5pt}
\textit{Claim}: $G_1,\,G_{2,\pm}$ maps $\cY_{\delta}\times[0,1]$ into $\cY_{\delta}$.

\vspace{5pt}
\textit{Proof of the claim}: For any $z=(x,p,u)\in\cY_{\delta}$, since, by \textbf{(H1)},
\[
H\circ G_1(z,t)=H(x,(1-t)p+tP_\ast(x,u),u)\leq H(x,p,u)\leq\delta
\]
and $u$ is unchanged under $G_1$, we have Im$(G_1)\subset\cY_{\delta}$.

\vspace{5pt}
For the map $G_{2,\pm}$, first notice that
\[
U_{-}(x,t)>u_-(x)-\delta\quad (\text{resp.}\,\, U_{+}(x,t)<u_+(x)+\delta),
\]
thus Im$(G_{2,\pm})\subset\cU_{\delta}$. Since $H$ is monotone in $u$, for any $(x,p)\in T^\ast M$,
\begin{align*}
\text{either}\quad H(x,p,U_{\pm}(x,t))&\,\leq H(x,p,u),\\
\text{or}\quad H(x,p,U_{\pm}(x,t))&\,\leq H(x,p,u_{\pm}(x)).
\end{align*}
Correspondingly, by the definition of $P_{\ast}$, we have
\begin{align*}
\text{either}\quad H\circ G_{2,\pm}(z,t)&\,\leq H(x,p,U_{\pm}(x,t))\leq H(x,p,u)\leq\delta,\\
\text{or}\quad H\circ G_{2,\pm}(z,t)&\,\leq H(x,P(x,u_{\pm}(x)),U_{\pm}(x,t))\\
&\,\leq H(x,P(x,u_{\pm}(x)),u_{\pm}(x))\leq0.
\end{align*}
Hence, Im$(G_{2,\pm})\subset H^{-1}((-\infty,\delta])$. This completes the proof of claim.

\vspace{5pt}
Besides, it is easy to see that $G_1(\cdot,1)=G_2(\cdot,0)$, thus we construct $G_{\pm}:\cY_{\delta}\times[0,1]\rightarrow\cY_{\delta}$ by the usual concatenation
\[
G_{\pm}(z,t)=
  \begin{cases}
    G_1(z,2t),\quad t\in[0,\frac{1}{2}]; \\
    G_{2,-}(z,2t-1)\,\,(\text{resp.}\,\,G_{2,+}(z,2t-1)),\quad t\in[\frac{1}{2},1].
  \end{cases}
\]
Then we have $G_{-}$ (resp. $G_+$) is continuous and
\begin{itemize}
  \item $G(z,1)\in\Sigma_-$ (resp. $G(z,1)\in\Sigma_+$) for any $z\in\cY_{\delta}$,
  \item $G(\cdot,t)=id_{\Sigma_-}$ (resp. $G(\cdot,t)=id_{\Sigma_+}$) for all $t\in[0,1]$,
\end{itemize}
thus $G_{-}$ (resp. $G_+$) is a strong deformation retraction from $\cY_{\delta}$ to $\Sigma_-$ (resp. $\Sigma_+$). This finishes the proof.
\end{proof}

We use Lemma \ref{connect} and Lemma \ref{ap-Atr} to give

\vspace{5pt}
\textit{Proof of }\textbf{(A3)}: Assume $H$ satisfies \textbf{(M$_-$)} (resp. \textbf{(M$_+$)}). Fix $\delta>0$, we define for $t\geq0$,
\[
\cO_t=\Phi^t_H(\cY_{\delta})\quad(\text{resp.}\,\,\cO_{t}=\Phi^{-t}_H(\cY_{\delta})).
\]
Since $\Phi^t_H,t\geq0$ (resp. $t\leq0$) is a diffeomorphism, we have $\cO_t$ is homotopic equivalent to $M$. Now Lemma \ref{ap-Atr} shows that $\{\cO_t\}_{t\geq0}$ is a basis of neighborhoods of $\cA_H$.

\vspace{5pt}
To show $\cA_H$ is connected. We argue by contradiction: assume there are two disjoint compact sets $\cK, \cK^{\prime}$ such that $\cA_H=\cK\amalg\cK^{\prime}$, where $\amalg$ denotes the disjoint union. Choosing open neighborhoods $\cK\subset\cO,\cK^{\prime}\subset\cO^{\prime}$ such that
\begin{equation}\label{ctr3}
\cO\cap\cO^{\prime}=\emptyset,
\end{equation}
and $\cO\amalg\cO^{\prime}$ is an open neighborhood of $\cA_H$.

\vspace{5pt}
By Lemma \ref{ap-Atr}, there is $T>0$ such that $\Phi^{T}_{H}\,(\cY_{\delta})\subseteq\cO\amalg\cO^{\prime}$. By Lemma \ref{connect}, $\Phi^{T}_{H}\,(\cY_{\delta})$ is path-connected, thus $z\in\cK, z^{\prime}\in\cK^{\prime}$ is connected by a path in $\Phi^{T}_{H}\,(\cY_{\delta})\subseteq\cO\amalg\cO^{\prime}$. This contradicts \eqref{ctr3}.\qed

\section{Proof of Theorem B}
In this section, we shall assume that $H:T^\ast M\times\R\rightarrow\R$ satisfies the additional assumptions \textbf{(H3)-(H4)} and then present a proof of Theorem B. The crucial tool is a last Lyapunov function on $\cY$, guaranteed by the strict convexity of $H$.

\vspace{10pt}
\subsection{Third Lyapunov function}
We need the following auxiliary lemma on strictly convex functions.
\begin{lemma}\label{conv}
Let $h:\R^n\rightarrow\R$ be a $C^2$ strictly convex function, i.e., the Hessian $d^2h(p)$ is positive definite everywhere. If $dh\,(0)=0$, then
\[
dh\,(p)\cdot p\geq0\quad\text{and}\quad dh(p)\cdot p=0\quad\text{if and only if}\quad p=0.
\]
\end{lemma}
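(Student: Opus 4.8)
The plan is to reduce this to a one-dimensional statement by slicing along rays through the origin. Fix $p \in \real^n$ with $p \neq 0$ and consider the auxiliary function $g : \real \to \real$ defined by $g(s) = h(sp)$. Then $g$ is $C^2$, and by the chain rule $g'(s) = dh(sp) \cdot p$ and $g''(s) = p^{\mathsf T} d^2h(sp)\, p$. Since $d^2h$ is positive definite everywhere and $p \neq 0$, we get $g''(s) > 0$ for all $s$, so $g$ is a strictly convex function of one real variable; in particular $g'$ is strictly increasing on $\real$. The hypothesis $dh(0) = 0$ gives $g'(0) = dh(0) \cdot p = 0$.

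Now everything follows from the strict monotonicity of $g'$. For $s > 0$ we have $g'(s) > g'(0) = 0$, and for $s < 0$ we have $g'(s) < g'(0) = 0$; evaluating $dh(p) \cdot p = g'(1)$ and writing $dh(p)\cdot p = g'(1) \geq g'(0) = 0$ is not quite enough by itself since we want the value at $s=1$, not a derivative comparison at $s=1$ — so instead I would argue via the mean value / fundamental theorem of calculus: $g'(1) - g'(0) = \int_0^1 g''(s)\, ds > 0$ because the integrand is strictly positive. Hence $dh(p)\cdot p = g'(1) > 0$ whenever $p \neq 0$. When $p = 0$ the quantity $dh(0)\cdot 0 = 0$ trivially, which gives the "if" direction of the last assertion, and the strict inequality just established for $p \neq 0$ gives the "only if" direction.

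Alternatively, and perhaps more cleanly, one can avoid the pointwise $p \neq 0$ case split by invoking strict monotonicity of gradients of strictly convex functions directly: for a $C^1$ convex function $h$ one has $(dh(p) - dh(q)) \cdot (p - q) \geq 0$, with strict inequality when $p \neq q$ under strict convexity; applying this with $q = 0$ and using $dh(0) = 0$ yields $dh(p) \cdot p \geq 0$ with equality iff $p = 0$. I would likely present the self-contained one-variable computation above rather than quoting this, to keep the section elementary as the paper intends.

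There is no serious obstacle here; the only point requiring a modicum of care is making sure the strict inequality is genuinely strict, i.e.\ that positive-definiteness of the Hessian along the whole segment $[0,p]$ (not merely at the endpoints) is what forces $\int_0^1 g''(s)\,ds > 0$, and that we correctly distinguish "$g'$ is strictly increasing" from the weaker "$g$ is convex." Everything else is a routine application of the chain rule and the fundamental theorem of calculus.
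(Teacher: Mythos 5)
Your argument is correct and is essentially the paper's own proof: the paper defines $g(t)=dh(tp)\cdot p$ (which is your $g'$) and writes $dh(p)\cdot p=\int_0^1\langle d^2h(tp)\,p,p\rangle\,dt$, deducing nonnegativity and the equality case from positive definiteness of the Hessian along the segment, exactly as you do via the fundamental theorem of calculus. The only cosmetic difference is that you differentiate $h(sp)$ twice instead of differentiating $dh(tp)\cdot p$ once.
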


\begin{proof}
For any $p\in\R^n$, set
\[
g(t)=dh(tp)\cdot p,\quad t\in[0,1],
\]
then $g\in C^1([0,1],\R)$. Since $g(0)=dh\,(0)=0$, we compute
\begin{align*}
&dh(p)\cdot p=g(1)-g(0)\\
=&\,\int^{1}_{0}\frac{dg}{dt}(t)\ dt\\
=&\,\int^{1}_{0}\langle d^2h(tp)\cdot p,p\rangle\ dt\geq 0.\\
\end{align*}
Since $d^2h(p)$ is positive definite everywhere, the last inequality becomes an equality if and only if $p=0$.
\end{proof}

Now we show that the coordinate function $u:T^\ast M\times\R\rightarrow\R$ serves as a Lyapunov function on $\cY$. Physically, $u$ plays the role of entropy in systems which realize the transfer between mechanical energy and thermal energy.
\begin{theorem}[\textbf{Third Lyapunov function}]\label{Ly-3rd}
$u:T^\ast M\times\R\rightarrow\R$ is monotone increasing along $\Phi^t_H$-orbits in $H^{-1}(0)$. Moreover, $z_0\in H^{-1}(0)$ is non-wandering under $\Phi^t_H$ if and only if $z_0\in\cF_H$.
\end{theorem}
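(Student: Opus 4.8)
The starting point is the third equation of \eqref{ch}, which reads $\dot u=\frac{\partial H}{\partial p}\cdot p-H$ along any integral curve. On $H^{-1}(0)$ the term $H$ vanishes identically by \eqref{energy}, so $\dot u=\frac{\partial H}{\partial p}(x,p,u)\cdot p$ there. Fixing $(x,u)$ and working in a chart of $M$ (so that $T^\ast_xM\cong\R^n$), the function $h(p)=H(x,p,u)$ is $C^2$ and strictly convex by \textbf{(H1)}, and $dh(0)=\frac{\partial H}{\partial p}(x,0,u)=0$ by \textbf{(H3)}; hence Lemma~\ref{conv} gives $\frac{\partial H}{\partial p}(x,p,u)\cdot p\geq0$, with equality if and only if $p=0$. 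This proves the first assertion: $u$ is non-decreasing along $\Phi^t_H$-orbits in $H^{-1}(0)$, and strictly increasing wherever $p\neq0$.

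For the characterization of non-wandering points I treat \textbf{(M$_-$)}, the case \textbf{(M$_+$)} following by the substitution $(x,p,u)\mapsto(x,-p,-u)$ already used above. If $z_0=(x_0,0,u_0)\in\cF_H$, then by \textbf{(H3)} and \eqref{ch} one has $\dot x=\dot p=\dot u=0$ at $z_0$, so $z_0$ is an equilibrium, hence non-wandering (and clearly $z_0\in H^{-1}(0)$). Conversely, let $z_0=(x_0,p_0,u_0)\in H^{-1}(0)$ be non-wandering; its forward orbit $z(t)=\Phi^t_H(z_0)$ is complete by \textbf{(A1)}. I claim $u$ is constant along it. Suppose not; by the monotonicity above there are $T_0>0$ and $\eta>0$ with $u(z(t))\geq u_0+\eta$ for all $t\geq T_0$. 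The difficulty is that a point $w$ near $z_0$ need not lie on $H^{-1}(0)$, so $u$ need not be monotone along its orbit; this is overcome with Proposition~\ref{Ly-1st}. Since Lemma~\ref{conv} in fact gives $\frac{\partial H}{\partial p}\cdot p\geq0$ on all of $T^\ast M\times\R$ (only \textbf{(H1)} and \textbf{(H3)} enter), we have $\dot u\geq-H$ along every orbit, while \eqref{Ly-1} gives $|H(\Phi^r_H(w))|\leq e^{-\lb r}|H(w)|$; integrating,
\[
u(\Phi^t_H(w))\geq u(\Phi^s_H(w))-\frac{1}{\lb}|H(w)|,\qquad t\geq s\geq0.
\]
Choose a neighborhood $U\subseteq\{\,|u-u_0|<\eta/4\,\}$ of $z_0$ so small that $|H(w)|<\eta\lb/8$ and $|u(\Phi^{T_0}_H(w))-u(z(T_0))|<\eta/4$ for every $w\in U$ (continuity of $H$ and of $\Phi^{T_0}_H$). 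Then for $w\in U$ and $t\geq T_0$ the displayed estimate yields $u(\Phi^t_H(w))>u_0+\eta-\eta/4-\eta/8>u_0+\eta/4$, so $\Phi^t_H(w)\notin U$; hence $\Phi^t_H(U)\cap U=\emptyset$ for all $t\geq T_0$, contradicting that $z_0$ is non-wandering.

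Therefore $u$ is constant on the orbit of $z_0$, so $\dot u\equiv0$; since $z(t)\in H^{-1}(0)$, the equality case of Lemma~\ref{conv} forces $p(t)\equiv0$, and then \eqref{ch} together with \textbf{(H3)} gives $\dot x=\frac{\partial H}{\partial p}(x,0,u)=0$ and $\dot p=-\frac{\partial H}{\partial x}(x,0,u)=0$. Consequently $x(t)\equiv x_0$, $u(t)\equiv u_0$, $\frac{\partial H}{\partial x}(x_0,0,u_0)=0$ and $H(x_0,0,u_0)=H(z_0)=0$, i.e.\ $z_0\in\cF_H$. The main obstacle is precisely the uniform control of $u$ along orbits of nearby points lying off $H^{-1}(0)$; all remaining steps are direct substitutions into \eqref{ch}.
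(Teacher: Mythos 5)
Your proof is correct and follows the same basic route as the paper's: Lemma \ref{conv} applied to $p\mapsto H(x,p,u)$ (via \textbf{(H1)} and \textbf{(H3)}) gives $\dot u=\frac{\partial H}{\partial p}\cdot p\ge 0$ on $H^{-1}(0)$ with equality exactly when $p=0$, and the equality case is what identifies the non-wandering points of $H^{-1}(0)$ with $\cF_H$. The one place you genuinely diverge is the implication that $z_0$ non-wandering forces $\dot u\equiv 0$ along its orbit: the paper asserts this in a single line, whereas you correctly point out that it is not automatic, because the non-wandering condition tests neighborhoods in the full phase space, where $u$ fails to be monotone off $H^{-1}(0)$. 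You repair this by combining $\dot u\ge -H$ (valid everywhere, again by Lemma \ref{conv}) with the exponential decay of $|H|$ from Proposition \ref{Ly-1st} to obtain the uniform bound $u(\Phi^t_H(w))\ge u(\Phi^s_H(w))-\frac{1}{\lb}|H(w)|$, and then run a standard no-return argument with a small neighborhood $U$; the $\eps$-bookkeeping checks out. So your write-up is more complete than the paper's at precisely the point where the paper's argument is thinnest, at the cost of a longer neighborhood construction.
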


\begin{proof}
Notice that the contact Hamiltonian $H$ satisfies \textbf{(H1)}, we apply Lemma \ref{conv} to $H(x,\cdot,u)$ to find, for any $z\in T^\ast M\times\R$,
\[
\frac{\partial H}{\partial p}(z)\cdot p\geq0\quad\text{and}\quad \frac{\partial H}{\partial p}(z)\cdot p=0\quad\text{if and only if}\quad p=0.
\]

For $z=(x,p,u)\in H^{-1}(0)$,
\begin{equation}\label{ieq:2}
\dot{u}=\frac{\partial H}{\partial p}(z)\cdot p-H(z)=\frac{\partial H}{\partial p}(z)\cdot p\geq0,
\end{equation}
which verifies the first conclusion. From the above discussions, $\dot{u}=0$ implies that
\begin{equation}\label{eq:1}
p=0,\quad \dot{x}=\frac{\partial H}{\partial p}(x,0,u)=0.
\end{equation}

Assume $z_0=(x_0,p_0,u_0)\in H^{-1}(0)$ is non-wandering under $\Phi^t_H$, then along the $\Phi^t_H$-orbits initiating from $z_0,\, \dot{u}\equiv0$. Now \eqref{eq:1} implies that $p\equiv0, \dot{x}\equiv0$ and $z_0\in\cF_H$.
\end{proof}

\subsection{Dynamics on $\cA_H$}
At the beginning, we assume $H$ only satisfies \textbf{(H3)} and show something more general. For two connected, compact, disjoint subsets $\cF_0,\cF_1$ of $\cF_H$, we use $\Sigma(\cF_0,\cF_1)$ to denote all $z\in T^{\ast}M\times\R$ satisfying
\[
\alpha(z)=\cF_0,\,\,\omega(z)=\cF_1,
\]
and $\Sigma_H$ to denote the union of all $\Sigma(\cF_0,\cF_1)$.

\vspace{5pt}
The compactness of $\cA_H$ follows from Theorem \ref{exist-Atr}. Let $z\in\cA_H$, $\cA_H$ is invariant under $\Phi^t_H$, hence
\[
\alpha(z)\subseteq\cA_H,\quad \omega(z)\subseteq\cA_H.
\]
Elementary knowledge from dynamical system shows that $\alpha(z), \omega(z)$ is closed, connected and non-wandering with respect to $\Phi^t_H$. Thus, using Theorem \ref{Ly-3rd}, $\alpha(z)$ and $\omega(z)$ are compact, connected subsets of $\cF_H$ and we conclude that
\[
u|_{\alpha(z)}\equiv u_0,\quad u|_{\omega(z)}\equiv u_1.
\]

\vspace{5pt}
Assume further that $z\in\cA_H\setminus\cF_H$ and $z(t)=(x(t), p(t), u(t))$ is the $\Phi^t_H$-orbit through $z$. By \eqref{ieq:2}, for any $t_0<t_1$,
\[
u(t_1)-u(t_0)=\int_{t_0}^{t_1}\dot{u}\ dt=\int_{t_0}^{t_1}\frac{\partial H}{\partial p}(z(t))\cdot p(t)\ dt>0
\]
since $p(t)$ is not identically $0$ on $[t_0, t_1]$. Thus $u$ is strictly increasing along $z(t)$ and we conclude that
\begin{equation}\label{trans-direction}
u_0<u_1,\quad\,\,\alpha(z)\cap\omega(z)=\emptyset.
\end{equation}
Since the choice of $z\in\cA_H$ is arbitrary, the above discussion leads to
\begin{equation}\label{pro-attractor3}
\cA_H=\cF_H\cup\Sigma_H.
\end{equation}

\vspace{5pt}
\textit{Proof of Theorem B}:
From \textbf{(A2)}, if $H$ satisfies \textbf{(M$_{+}$)} (resp. \textbf{(M$_{-}$)}), then $\alpha(z)\neq\emptyset$ (resp. $\omega(z)\neq\emptyset$) if and only if $z\in\cA_H$. Assume $H$ satisfies \textbf{(H4)}, then $\cF_H$ is a finite set. The connectedness of $\cF_0, \cF_1$ implies that both of them are singleton, so we assume $\cF_0=\{z_0=(x_0,0,u_0)\}, \cF_1=\{z_1=(x_1,0,u_1)\}$. This proves \textbf{(B2)}. It follows from the definition of $\Sigma(\cF_0,\cF_1)$ that
\begin{equation}\label{hetero}
\lim_{t\rightarrow-\infty}z(t)=z_0,\quad \lim_{t\rightarrow\infty}z(t)=z_1,
\end{equation}
and the structure of $\cA_H$ follows from \eqref{pro-attractor3}.

\vspace{5pt}
Since $\cA_H$ is compact, for any two equilibria $z^\prime, z^{\prime\prime}, \{z^\prime,z^{\prime\prime}\}\cup\Sigma(z^{\prime},z^{\prime\prime})$
is closed. Let $z_0\in\cF_H$ and $\mathcal{P}_{z_0}$ the path-component of $\cA_H$ containing $z_0$.  By \eqref{pro-attractor3},\eqref{hetero}, $\mathcal{P}_{z_0}$ consists of finite equilibria and heteroclinic orbits between them and is therefore a union of finite closed sets. Thus $\mathcal{P}_{z_0}$ is a closed subset of $\cA_H$. Since any two path-components are disjoint, by \textbf{(A3)}, there is only one path-component and $\cA_H$ is path-connected. This proves \textbf{(B1)}.

\vspace{5pt}
Finally, notice that \textbf{(B3)} is a direct consequence of \eqref{trans-direction}.

\qed

\vspace{5pt}
\subsection{Applications to discounted systems}
~\\
To apply our results, we consider the discounted Hamiltonian
\begin{equation}\label{dh}
H(x,p,u)=\lb u+h(x,p),\quad\lb>0,
\end{equation}
where $h:T^\ast M\rightarrow\R$ satisfies \textbf{(H1)-(H4)} (these assumptions are independent of $u$). Then $X_H$ (or system \eqref{ch}) could be reduced to the vector field (or discounted system)
\begin{equation}\label{css}
X_{h,\lb}:
\begin{cases}
\dot{x}=\frac{\partial h}{\partial p}(x,p),\\
\dot{p}=-\frac{\partial h}{\partial x}(x,p)-\lb p.
\end{cases}
\end{equation}
defined on $T^\ast M$. Denote the phase flow of $X_{h,\lb}$ by $\phi^t_{h,\lb}:T^\ast M\rightarrow T^\ast M$. Using \textbf{(A1)}, $\phi^t_{h,\lb}$ is forward complete. Let $\Om=d\al$, $X_{h,\lb}$ is also called conformally symplectic since, by \eqref{css}, $\mathcal{L}_{X_{h,\lb}}\Omega=-\lb\Omega$. This leads to
\begin{equation}\label{decay}
(\phi^t_{h,\lb})^\ast\omega=e^{-\lb t}\om\quad\text{for all}\,\,t\geq0.
\end{equation}

\vspace{5pt}
For any $(x,p)\in T^\ast M$, set $(x(t),p(t)):=\phi^t_{h,\lb}(x,p)$ and
\[
u(t)=e^{-\lb t}\bigg[-\frac{h(x,p)}{\lb}+\int^t_0 e^{\lb s}\,\bigg(\frac{\partial h}{\partial p}\cdot p-h\bigg)\,((x(s),p(s))\ ds\bigg],
\]
then $z(t)=(x(t),p(t),u(t))$ satisfies \eqref{ch} with the Hamiltonian \eqref{dh}. Thus by \eqref{energy}, we have for $t\in\R$,
\begin{equation}\label{d-energy}
\lb u(t)+h(x(t),p(t))=0.
\end{equation}
Thus we obtain a converse version of Remark \ref{graph}.

\begin{lemma}
Assume for $(x,p)\in T^\ast M$, there is a compact subset $\cK\subset T^\ast M$ such that $\{(x(t),p(t)):t\in\R\}\subset\cK$, then for $u(t)$ defined above,
\[
(x(t),p(t),u(t))\in\cA_H,\quad\text{for all}\,\,t\in\R.
\]
\end{lemma}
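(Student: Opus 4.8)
\textit{Proof proposal.}
The plan is to show that the curve $z(t)=(x(t),p(t),u(t))$ traces out a compact, $\Phi^t_H$-invariant set, and then invoke Theorem \ref{cpt-inv} together with Remark \ref{flexible-Atr} (or directly Theorem \ref{exist-Atr}) to conclude that this set, hence each point on it, lies in $\cA_H$. First I would verify that $z(t)$ is genuinely an integral curve of $X_H$ for the Hamiltonian \eqref{dh}: this is the computation already sketched above the statement, where $u(t)$ was built precisely as the solution of the linear scalar ODE $\dot u=\frac{\partial h}{\partial p}\cdot p-h-\lb u$ along the given $(x(t),p(t))$, so the triple solves \eqref{ch}. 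Since $\phi^t_{h,\lb}$ is a genuine flow on $T^\ast M$ (forward complete by \textbf{(A1)}, and backward defined on a maximal interval), and the curve stays in $\cK$ compact for all $t\in\R$, the lift $z(t)$ is defined for all $t\in\R$ as well.

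Next I would pin down the $u$-component. By \eqref{d-energy}, $\lb u(t)+h(x(t),p(t))=0$ for all $t$, so $u(t)=-h(x(t),p(t))/\lb$; since $(x(t),p(t))$ ranges in the compact set $\cK$ and $h$ is continuous, $u(t)$ is bounded, say $|u(t)|\le R$. Therefore the orbit closure $\Gamma:=\overline{\{z(t):t\in\R\}}$ is a closed subset of the compact set $\cK\times[-R,R]\subset T^\ast M\times\R$, hence compact. It is $\Phi^t_H$-invariant: $\Phi^s_H(z(t))=z(t+s)$ for all $s,t$ because $z(\cdot)$ is a complete integral curve, so the orbit is invariant and taking closures preserves invariance (using continuity of $\Phi^s_H$, which by \textbf{(A1)} is defined for $s\ge 0$, and on $\Gamma\subset H^{-1}(0)$ the flow is in fact defined for all $s\in\R$ by Lemma \ref{exist} since $\Gamma$ is compact).

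With $\Gamma$ a compact $\Phi^t_H$-invariant set, Theorem \ref{cpt-inv} gives $\Gamma\subset\cY$, and since $\cA_H$ is by construction the closure of the union of all compact $\Phi^t_H$-invariant sets (Theorem \ref{exist-Atr}), we get $\Gamma\subseteq\cA_H$. In particular $z(t)\in\Gamma\subseteq\cA_H$ for every $t\in\R$, which is the claim.

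I do not expect a serious obstacle here: the statement is essentially a bookkeeping converse to Remark \ref{graph}, and every ingredient — the ODE verification, the energy identity \eqref{d-energy} bounding $u(t)$, compactness of the orbit closure, and Theorem \ref{cpt-inv} — is already in place. The one point deserving a line of care is that $z(t)$ extends to a \emph{two-sided} complete orbit: forward completeness is \textbf{(A1)}, while backward completeness is not automatic for a general monotone system, but it follows because the orbit remains in the compact set $\cK\times[-R,R]$, so Lemma \ref{exist} (no finite-time escape from a compact set) applies on the whole real line.
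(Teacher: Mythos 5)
Your proposal is correct and follows essentially the same route as the paper: bound $u(t)$ via the identity \eqref{d-energy} using compactness of $\cK$, observe that the orbit closure is then a compact $\Phi^t_H$-invariant set, and conclude it lies in $\cA_H$. The extra care you take with two-sided completeness and the ODE verification is fine but not a different argument.
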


\begin{proof}
It follows directly from \eqref{d-energy} that $u(t),t\in\R$ is bounded, thus the closure of $\{(x(t),p(t),u(t)):t\in\R\}$ is a compact, $\Phi^t_H$-invariant set.
\end{proof}

\vspace{5pt}
Denote the set of equilibria of $\phi^t_{h,\lb}$ by
\[
\cF_{h,\lb}=\{(x_0,0)\in T^{\ast}M\,:\,\partial_{x}h(x_0,0)=0\},
\]
for $(x_0,0),(x_1,0)\in\cF_{h,\lb}$ the set of all $(x,p)\in T^{\ast}M$ satisfying
\[
\lim_{t\rightarrow-\infty}\phi^t_{h,\lb}(x,p)=(x_0,0),\quad \lim_{t\rightarrow+\infty}\phi^t_{h,\lb}(x,p)=(x_1,0).
\]
by $\Sigma(x_0,x_1)$ and
\[
\Sigma_{h,\lb}=\cup_{(x_0,0),(x_1,0)\in\cF_{h,\lb}}\Sigma(x_0,x_1).
\]
Similar to Definition \ref{mga}, one can define the maximal attractor $\cA_{h,\lb}$ for $\phi^t_{h,\lb}$, it is also a maximal compact, $\phi^t_{h,\lb}$-invariant set. Now Remark \ref{graph} and the above lemma help us translate Theorem B into:
\begin{theorem}
Assume $h\in C^2(T^{\ast}M,\R)$ satisfies \textbf{(H1)-(H4)}, then
\begin{itemize}
  \item [\textbf{(1)}] The maximal attractor $\cA_{h,\lb}=\cF_{h,\lb}\cup\Sigma_{h,\lb}$. In particular, $\cA_{h,\lb}$ is path-connected.

  \vspace{5pt}
  \item [\textbf{(2)}] For any $(x,p)\in T^{\ast}M$, both $\alpha(x,p), \omega(x,p)$ consist of at most one equilibrium in $\cF_{h,\lb}$.

  \vspace{5pt}
  \item [\textbf{(3)}] For two distinct equilibria $(x_0,0),(x_1,0)\in\cF_{h,\lb}$, assume $\Sigma(x_0,x_1)\neq\emptyset$, then $h(x_0,0)>h(x_1,0)$.
\end{itemize}
\end{theorem}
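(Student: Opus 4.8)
The plan is to transfer Theorem B from the contact flow $\Phi^t_H$ attached to the discounted Hamiltonian \eqref{dh} to the conformally symplectic flow $\phi^t_{h,\lb}$, via the graph correspondence already implicit in Remark \ref{graph} and in the lemma preceding the statement. The first step is to make this correspondence precise as a flow conjugacy. Since $H=\lb u+h$ satisfies \textbf{(M$_-$)}, \eqref{energy} (with $\partial H/\partial u=\lb$) shows $H^{-1}(0)$ is invariant under $\Phi^t_H$ in both time directions wherever the flow is defined, and by \eqref{d-energy} it is exactly the graph $\{(x,p,-h(x,p)/\lb):(x,p)\in T^\ast M\}$. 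Because the $(x,p)$-components of \eqref{ch} for $H=\lb u+h$ do not involve $u$ and coincide with \eqref{css}, the map $\iota:T^\ast M\to H^{-1}(0)$, $\iota(x,p)=(x,p,-h(x,p)/\lb)$, is a diffeomorphism with inverse $\pi|_{H^{-1}(0)}$ and satisfies $\Phi^t_H\circ\iota=\iota\circ\phi^t_{h,\lb}$ for all $t$ for which either side is defined (forward always, by \textbf{(A1)}; backward whenever the backward $\phi^t_{h,\lb}$-orbit exists). Consequently $\iota$ carries orbits to orbits and limit sets to limit sets: $\omega(\iota(x,p))=\iota(\omega(x,p))$, and $\alpha(\iota(x,p))=\iota(\alpha(x,p))$ whenever the backward orbit of $(x,p)$ is complete (in particular whenever $\alpha(x,p)\neq\emptyset$, since an $\alpha$-limit point forces $a(x,p)=-\infty$).

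Next I would identify the two maximal attractors. By Theorem \ref{exist-Atr}, $\cA_H\subset\cY\subset H^{-1}(0)$, hence $\cA_H=\iota(\pi(\cA_H))$ and $\pi(\cA_H)$ is a compact set on which $\phi^t_{h,\lb}$ is complete and invariant. Conversely, the lemma preceding the theorem gives, for any compact $\phi^t_{h,\lb}$-invariant set $\cK\subset T^\ast M$, the inclusion $\iota(\cK)\subset\cA_H$, whence $\cK=\pi(\iota(\cK))\subseteq\pi(\cA_H)$. Therefore $\pi(\cA_H)$ is the maximal compact $\phi^t_{h,\lb}$-invariant set, i.e.\ $\cA_{h,\lb}=\pi(\cA_H)$ and $\iota(\cA_{h,\lb})=\cA_H$. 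In the same way $\iota$ matches the distinguished invariant sets: using \textbf{(H3)} (so that $\partial_p h(x,0)=0$ and such points really are equilibria of $X_{h,\lb}$), one has $(x_0,0)\in\cF_{h,\lb}$ iff $\iota(x_0,0)=(x_0,0,-h(x_0,0)/\lb)\in\cF_H$, and $\iota(\Sigma(x_0,x_1))=\Sigma(\iota(x_0,0),\iota(x_1,0))$, so $\pi(\cF_H)=\cF_{h,\lb}$ and $\pi(\Sigma_H)=\Sigma_{h,\lb}$.

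Finally the three assertions follow by pushing Theorem B through $\iota$. Applying $\pi$ to \textbf{(B1)} gives $\cA_{h,\lb}=\pi(\cF_H\cup\Sigma_H)=\cF_{h,\lb}\cup\Sigma_{h,\lb}$, and path-connectedness is preserved because $\pi|_{\cA_H}$ is a homeomorphism; this is \textbf{(1)}. For \textbf{(2)}, $\omega(x,p)=\pi(\omega(\iota(x,p)))$ is at most one equilibrium by \textbf{(B2)}; and if $\alpha(x,p)\neq\emptyset$ then, by the translated \textbf{(A2)}, $(x,p)\in\cA_{h,\lb}$, its backward orbit is complete, and $\alpha(x,p)=\pi(\alpha(\iota(x,p)))$ is again at most one equilibrium by \textbf{(B2)}. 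For \textbf{(3)}, $\Sigma(x_0,x_1)\neq\emptyset$ is equivalent to $\Sigma(\iota(x_0,0),\iota(x_1,0))\neq\emptyset$, so \textbf{(B3)} yields $-h(x_0,0)/\lb<-h(x_1,0)/\lb$, i.e.\ $h(x_0,0)>h(x_1,0)$ since $\lb>0$. The only delicate point — and the mild obstacle — is the absence of backward completeness for $\phi^t_{h,\lb}$: this is why the conjugacy of $\alpha$-limit sets must be invoked only where the backward orbit exists, equivalently only on $\cA_{h,\lb}$, which is exactly where Theorem A guarantees that $\Phi^t_H$ (and hence $\phi^t_{h,\lb}$) extends to all times.
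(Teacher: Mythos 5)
Your proposal is correct and follows essentially the same route as the paper: the paper proves this theorem precisely by combining Remark \ref{graph} (the projection $\pi$ restricted to $\cA_H\subset H^{-1}(0)$ is a homeomorphism, whose inverse is your graph map $\iota$) with the preceding lemma (bounded $\phi^t_{h,\lb}$-orbits lift into $\cA_H$), and then translating \textbf{(B1)}--\textbf{(B3)} through this correspondence. You have merely made explicit the flow conjugacy $\Phi^t_H\circ\iota=\iota\circ\phi^t_{h,\lb}$ and the matching of $\cF_H$, $\Sigma_H$ with $\cF_{h,\lb}$, $\Sigma_{h,\lb}$, which the paper leaves implicit.
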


\begin{remark}
By \eqref{decay}, $\cA_{h,\lb}$ is of measure zero with respect to the Lebesgue measure $\Omega^n$ on $T^\ast M$.
\end{remark}

We could verify the above theorem on the well-known example
\begin{example}
$H(x,p,u)=\lambda u+\frac{1}{2}p^2+\cos(x),\,\, (x,p,u)\in T^\ast\mathbb{T}\times\R$.
\end{example}

\section{Appendix: Preliminaries}
This section serves as an supplementary explanation of several terms arising in the context. In particular, readers who are not familiar with the notion of viscosity solution may find more information about them.

\vspace{5pt}
\subsection{Vector field $X_H$ and its phase flow}
~\\
Let $X_H$ be the contact Hamiltonian vector field, $X_H$ is $C^1$ by the local expression \eqref{ch}. Thus the local existence theorem \cite[Page 276, Corollary]{Ar1} implies: for every $z_0\in T^\ast M\times\R$, there exists a neighborhood $\cO_0$ of $z_0,\,\,a_{z_0}<0<b_{z_0}$ and a map
\[
\Phi_H\in C^1([a_{z_0}, b_{z_0}]\times\cO_0, T^\ast M\times\R);\,\,(t,z^{\prime})\mapsto\Phi_H(t;z^{\prime})
\]
called the phase flow generated by $X_H$ satisfying for every $t\in[a_{z_0}, b_{z_0}]$,
\begin{itemize}
  \item  $\Phi_H(t\,;\,\cdot):\cO_0\rightarrow T^\ast M\times\R$ is a diffeomorphism onto its image.

  \vspace{5pt}
  \item $\frac{\partial}{\partial t}\Phi_H(t;z^\prime)=X_H(\Phi_H(t;z^\prime))$ and $\Phi_H(0;z^\prime)=z^\prime$.
\end{itemize}
For every $z\in T^{\ast}M\times\R$, let $(a(z), b(z))$ be the maximum existence interval of the integral curve through $z$, the extension theorem \cite[Page 102, Corollary 9]{Ar1} states that $\Phi_H$ is well-defined on some neighborhood of Im($z|_{[a,b]}$), $[a,b]\subset(a(z), b(z))$.

\vspace{5pt}
In the context of this paper, we use the brief notation $\Phi^t_H(\cdot)$ to replace $\Phi_H(t\,;\,\cdot)$. By the above discussion, $\Phi^t_H(z),z\in(a(z), b(z))$ coincides with the unique integral curve $z(t)=(x(t),p(t),u(t))$ of $X_H$ through $z$ and
\begin{prop}\label{exist}
Assume $b(z)<\infty$ (resp. $a(z)>-\infty$), then
\[
\lim_{t\rightarrow b(z)_-}|u(t)|+\|p(t)\|_{x(t)}=\infty\quad(\text{resp.}\,\,\lim_{t\rightarrow a(z)_+}|u(t)|+\|p(t)\|_{x(t)}=\infty).
\]
\end{prop}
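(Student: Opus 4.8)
The plan is to establish the contrapositive in the form actually needed: if the integral curve $z(t)=(x(t),p(t),u(t))$ stays in a compact set as $t\to b(z)$ (resp. $t\to a(z)$), then the maximal existence interval is not bounded on that side. Since $M$ is compact, the quantity that can blow up is $|u(t)|+\|p(t)\|_{x(t)}$, so it suffices to show that if this quantity stays bounded on $[0,b(z))$ then $b(z)=+\infty$ (the backward case being symmetric). First I would invoke the local existence and extension theory recalled in the Appendix, specifically \cite[Page 102, Corollary 9]{Ar1}: the flow $\Phi_H$ is defined on a neighborhood of the image of any compact subinterval $[a,b]\subset(a(z),b(z))$, and an integral curve cannot be extended past a finite endpoint only if it eventually leaves every compact subset of the phase space.

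The key steps, in order, are as follows. Suppose for contradiction that $b(z)<\infty$ while $\sup_{t\in[0,b(z))}\bigl(|u(t)|+\|p(t)\|_{x(t)}\bigr)=:R<\infty$. Then the curve $\{z(t):t\in[0,b(z))\}$ lies in the set
\[
\cK=\{(x,p,u)\in T^\ast M\times\R:\ \|p\|_x\le R,\ |u|\le R\},
\]
which is compact because $M$ is compact. By continuity of the $C^1$ vector field $X_H$ on the compact set $\cK$, there is a constant $C$ with $\|X_H(z')\|\le C$ for all $z'\in\cK$; here $\|\cdot\|$ is measured in any fixed background Riemannian metric on the total space $T^\ast M\times\R$. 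Consequently the curve $z(t)$ is uniformly Lipschitz on $[0,b(z))$, hence (as $b(z)<\infty$) it is uniformly continuous and therefore extends continuously to $t=b(z)$ with a limit point $z^\ast\in\cK$. Applying the local existence theorem at $z^\ast$ produces a genuine solution on $[b(z), b(z)+\eps)$ for some $\eps>0$; concatenating with $z(t)$ and using uniqueness of integral curves through $z^\ast$, we obtain an integral curve through $z$ defined past $b(z)$, contradicting the maximality of $(a(z),b(z))$. The backward statement follows by the same argument applied to $-t$, i.e.\ to the vector field $-X_H$.

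The only genuinely delicate point is the passage from boundedness of $|u(t)|+\|p(t)\|_{x(t)}$ to the curve living in a set that is \emph{compact in the total space} $T^\ast M\times\R$, rather than merely bounded in fiber coordinates. This is exactly where closedness of $M$ is used: a bound on $\|p\|_x$ in terms of the chosen auxiliary Riemannian metric, together with compactness of the base, forces $(x,p)$ into a compact subset of $T^\ast M$, and then the bound on $|u|$ confines $u$ to a compact interval. Once compactness is in hand, the rest is the standard ODE escape-from-compacta argument, and no further structure of the contact Hamiltonian (neither \textbf{(H1)}--\textbf{(H2)} nor the monotonicity hypotheses) is required. I would therefore present the proof essentially as the two-line reduction above followed by a pointer to \cite[Page 102, Corollary 9]{Ar1}, spelling out only the compactness step in detail.
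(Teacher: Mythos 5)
There is a genuine gap: you have negated the wrong statement. The proposition asserts that $|u(t)|+\|p(t)\|_{x(t)}$ \emph{tends to infinity} as $t\to b(z)^-$, so its negation is only that there exists a sequence $t_n\to b(z)^-$ along which this quantity stays bounded; it is \emph{not} that the quantity is bounded on all of $[0,b(z))$. Your argument assumes $\sup_{t\in[0,b(z))}\bigl(|u(t)|+\|p(t)\|_{x(t)}\bigr)=R<\infty$ and derives a contradiction, which proves only that the quantity is unbounded near $b(z)$, i.e.\ that the $\limsup$ is infinite. It leaves open the a priori possibility that $b(z)<\infty$ while the curve returns infinitely often to a fixed compact set and yet escapes every compact set along another sequence of times (so $\liminf<\infty=\limsup$). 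Your Lipschitz/uniform-continuity step genuinely uses the global bound $R$ and collapses without it, so this is not a cosmetic point.

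The paper's proof closes exactly this gap by using the \emph{uniform} local existence time: assuming only that $|u(t_n)|+\|p(t_n)\|_{x(t_n)}\leq M$ along some $t_n\to b(z)^-$, one extracts a convergent subsequence $z(t_n)\to z_0$ (here your compactness observation, that bounds on $\|p\|_x$ and $|u|$ over the closed manifold $M$ confine the point to a compact subset of $T^\ast M\times\R$, is still the right ingredient) and invokes the local flow theorem at $z_0$, which provides a single $b_{z_0}>0$ such that every point of a neighborhood $\cO_0$ of $z_0$ has an integral curve defined up to time $b_{z_0}$. Hence $b(z(t_n))\geq b_{z_0}>0$ for all large $n$, while maximality of the interval forces $b(z(t_n))=b(z)-t_n\to 0$, a contradiction. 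You should restructure your argument along these lines; note that the weaker statement you actually proved does suffice for Corollary \ref{ext}, but it is not the proposition as stated.
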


\vspace{5pt}
\begin{proof}
It is enough to consider the case $b(z)<\infty$ and we argue by contradiction: assume there is $M>0$ and $t_n<b(z),\,\,n\geq1$ such that
\[
\lim_{n\rightarrow\infty}t_n=b(z)\quad\text{and}\quad\limsup_{n\rightarrow\infty}|u(t_n)|+\|p(t_n)\|_{x(t_n)}\leq M.
\]
Thus, by passing to a subsequence, $z_n=(x(t_n),p(t_n),u(t_n))$ converges to some $z_0$ and by the definition of $b(z)$,
\begin{equation}\label{max-int}
\lim_{n\rightarrow\infty}b(z_n)=\lim_{n\rightarrow\infty}[b(z)-t_n]=0.
\end{equation}
Applying the definition of local phase flow at $z_0$, for $n$ large enough, $\Phi^t(z_n)$ is well-defined on $[a_{z_0}, b_{z_0}]$. This leads to the conclusion $b(z_n)\geq b_{z_0}>0$. This contradicts \eqref{max-int}.
\end{proof}

Applying the above proposition, we have the well-known extension theorem
\begin{corollary}\label{ext}
For $z\in T^{\ast}M\times\R$, if there is a compact subset $\cK\subset T^{\ast}M\times\R$ such that
\[
z(t)\in\cK,\quad\text{for all}\,\,t\in(a(z),b(z)),
\]
then $(a(z),b(z))=\R$.
\end{corollary}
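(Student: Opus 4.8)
The statement is an immediate consequence of the blow-up alternative recorded in Proposition \ref{exist}, so the plan is simply to run the standard ``escape or global existence'' argument. I would argue by contradiction and treat the forward endpoint, the backward one being symmetric. Assume $b(z)<\infty$. The function $(x,p,u)\mapsto|u|+\|p\|_x$ is continuous on $T^{\ast}M\times\R$, hence bounded on the compact set $\cK$ by some constant $C\geq0$. Since $z(t)\in\cK$ for all $t\in(a(z),b(z))$, we get $|u(t)|+\|p(t)\|_{x(t)}\leq C$ for every such $t$, and in particular $\limsup_{t\to b(z)^-}\big(|u(t)|+\|p(t)\|_{x(t)}\big)\leq C<\infty$. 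This contradicts Proposition \ref{exist}, which forces this $\limsup$ to equal $+\infty$ when $b(z)<\infty$. Therefore $b(z)=+\infty$, and the same reasoning applied at the left endpoint yields $a(z)=-\infty$, so $(a(z),b(z))=\R$.

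There is essentially no obstacle here: the only ingredient beyond Proposition \ref{exist} is the elementary fact that a continuous real-valued function is bounded on a compact set, applied to the ``size'' function $|u|+\|p\|_x$. One may note that the auxiliary complete Riemannian metric fixed in the introduction is what makes $\|\cdot\|_x$ a well-defined continuous function on the fibres of $T^{\ast}M$, but completeness plays no further role in this corollary since we only restrict attention to the given compact $\cK$. The content that made the statement non-trivial --- namely that an integral curve of $X_H$ can fail to be global only by having $|u|+\|p\|_x$ blow up at a finite time, rather than through some other degeneration of the base point $x(t)$ --- has already been absorbed into Proposition \ref{exist}, which in turn rests on the local existence and extension theorems for the $C^1$ vector field $X_H$.
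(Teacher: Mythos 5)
Your argument is correct and is exactly the route the paper takes: the corollary is stated as an immediate consequence of Proposition \ref{exist} (the blow-up alternative), and your contradiction via boundedness of the continuous function $|u|+\|p\|_x$ on the compact set $\cK$ is precisely the intended (and there only implicit) justification. Nothing is missing.
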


\vspace{5pt}
\subsection{Lipschitz estimate of viscosity solutions}
~\\
Let $H:T^{\ast}M\times\R\rightarrow\R$ be a contact Hamiltonian satisfying \textbf{(H1)-(H2)} and \textbf{(M$_-$)} (resp. \textbf{(M$_+$)}). Since $M$ is compact, the assumption \textbf{(H2)} and \textbf{(M$_{-}$)} (resp. \textbf{(M$_{+}$)}) implies
\begin{itemize}
  \item [\textbf{(H2$^\prime$)}] for every $e, U\in\R$, there is $P\,(e,U)>0$ such that if $\|p\|_x>P,\,\,u\geq U$ (resp. $u\leq U$), then $H(x,p,u)>e$.
\end{itemize}

\vspace{5pt}
It is well-known that (HJ$_-$) does not admit $C^1$ solutions in general. The following definition is originally due to M. Crandall and P. L. Lions \cite{CP} and is used extensively in the study of Hamilton-Jacobi equations.
\begin{definition}\label{vis-def}
Let $u:M\rightarrow\R$ be a continuous function.

\vspace{3pt}
We call $u$ a viscosity sub-solution (resp. super-solution) of (HJ$_-$) if for every $x\in M, \phi\in C^1(M,\R)$ such that $u-\phi$ attains a local maximum (resp. minimum) at $x$,
\[
H(x,d\phi(x),u(x))\leq0\quad(\text{resp. }\,\,H(x,d\phi(x),u(x))\geq0).
\]

\vspace{3pt}
We call $u$ a viscosity solution of (HJ$_-$) if it is both a viscosity sub- and super-solution of (HJ$_-$).
\end{definition}

The following property is standard and crucial in deducing the uniqueness of viscosity solution of the equation (HJ$_-$). Our estimate of solutions also depends on it.
\begin{prop}[\textbf{Comparison principle}]
Assume $H:T^\ast M\times\R\rightarrow\R$ satisfies \textbf{(M$_-$)} and $u,v\in C(M,\R)$ are respectively viscosity sub- and super-solutions of (HJ$_-$). Then $u\leq v$ on $M$.
\end{prop}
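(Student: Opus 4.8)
The plan is to run the classical Crandall--Lions doubling-of-variables argument, adapted to the compact manifold $M$, and to close it using the strict monotonicity \textbf{(M$_-$)} together with the coercivity statement \textbf{(H2$^\prime$)} from the appendix. Suppose, for contradiction, that $\theta:=\max_M(u-v)>0$; the maximum is attained by compactness of $M$, say at $x_0$. Fix the auxiliary Riemannian distance $d$ on $M$ and, for $\varepsilon>0$, consider
\[
\Phi_\varepsilon(x,y)=u(x)-v(y)-\tfrac{1}{2\varepsilon}\,d(x,y)^2
\]
on the compact set $M\times M$; let $(x_\varepsilon,y_\varepsilon)$ be a maximum point. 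Comparing $\Phi_\varepsilon(x_\varepsilon,y_\varepsilon)$ with $\Phi_\varepsilon(x_0,x_0)=\theta$ and using continuity of $u,v$ yields the standard facts that $d(x_\varepsilon,y_\varepsilon)\to0$, $\tfrac{1}{2\varepsilon}d(x_\varepsilon,y_\varepsilon)^2\to0$, and, along a subsequence, $x_\varepsilon,y_\varepsilon\to\bar x$ with $u(\bar x)-v(\bar x)=\theta$; in particular $u(x_\varepsilon)>v(y_\varepsilon)$ once $\varepsilon$ is small.

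Next I would freeze one variable at a time. Since $x\mapsto u(x)-\tfrac{1}{2\varepsilon}d(x,y_\varepsilon)^2$ has a local maximum at $x_\varepsilon$, the sub-solution property of $u$ gives $H(x_\varepsilon,p_\varepsilon,u(x_\varepsilon))\leq0$, where $p_\varepsilon=\tfrac{1}{2\varepsilon}\,d_x\big(d(\cdot,y_\varepsilon)^2\big)(x_\varepsilon)$; likewise $y\mapsto v(y)+\tfrac{1}{2\varepsilon}d(x_\varepsilon,y)^2$ has a local minimum at $y_\varepsilon$, so the super-solution property of $v$ gives $H(y_\varepsilon,q_\varepsilon,v(y_\varepsilon))\geq0$ with $q_\varepsilon=-\tfrac{1}{2\varepsilon}\,d_y\big(d(x_\varepsilon,\cdot)^2\big)(y_\varepsilon)$. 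The crucial a priori bound is that these momenta stay in a fixed compact set: applying \textbf{(H2$^\prime$)} to the sub-solution inequality with $e=0$ and $U=-\|u\|_{\infty}\leq u(x_\varepsilon)$ forces $\|p_\varepsilon\|_{x_\varepsilon}\leq P(0,-\|u\|_{\infty})$, and since $\|q_\varepsilon\|_{y_\varepsilon}=\|p_\varepsilon\|_{x_\varepsilon}=\tfrac1\varepsilon d(x_\varepsilon,y_\varepsilon)$, the same bound holds for $q_\varepsilon$. (Note this bound genuinely comes from the equation, not from the penalization.)

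To finish, subtract the two inequalities, insert the intermediate term $H(y_\varepsilon,q_\varepsilon,u(x_\varepsilon))$, and estimate the resulting difference in the last slot by integrating \textbf{(M$_-$)} in $u$ from $v(y_\varepsilon)$ up to $u(x_\varepsilon)$:
\[
0\leq H(y_\varepsilon,q_\varepsilon,v(y_\varepsilon))-H(x_\varepsilon,p_\varepsilon,u(x_\varepsilon))\leq-\lb\big(u(x_\varepsilon)-v(y_\varepsilon)\big)+H(y_\varepsilon,q_\varepsilon,u(x_\varepsilon))-H(x_\varepsilon,p_\varepsilon,u(x_\varepsilon)).
\]
Hence $\lb\big(u(x_\varepsilon)-v(y_\varepsilon)\big)\leq H(y_\varepsilon,q_\varepsilon,u(x_\varepsilon))-H(x_\varepsilon,p_\varepsilon,u(x_\varepsilon))$. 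Along the subsequence on which $x_\varepsilon,y_\varepsilon\to\bar x$ and (after a further extraction, using the uniform bound above) $p_\varepsilon,q_\varepsilon$ converge to a common covector at $\bar x$, the right-hand side tends to $0$ by continuity of $H$, while the left-hand side tends to $\lb\theta>0$, a contradiction; therefore $\theta\leq0$, i.e. $u\leq v$ on $M$.

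The only delicate point I expect is the manifold bookkeeping at the very end: justifying that $p_\varepsilon$ and $q_\varepsilon$ have the same limit. This is handled by localizing to a fixed coordinate chart around $\bar x$ (legitimate for small $\varepsilon$) and invoking the standard first-order estimates for the squared-distance penalization, which identify $p_\varepsilon$ and $q_\varepsilon$ under parallel transport along the minimizing geodesic joining $x_\varepsilon$ and $y_\varepsilon$, whose length tends to $0$; see \cite{Ishii}. Everything else is a direct application of Definition \ref{vis-def} and the hypotheses \textbf{(H2$^\prime$)} and \textbf{(M$_-$)}.
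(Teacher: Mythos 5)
Your argument is correct, but there is nothing in the paper to compare it against: the authors state this comparison principle without proof, presenting it as a standard fact from the viscosity-solution literature (cf.\ \cite{CP,Ishii}). What you have written is precisely the classical Crandall--Lions doubling-of-variables proof, and you handle correctly the one step that is not completely routine in this setting: since no uniform continuity of $H$ in $(x,p)$ is assumed, the argument only closes because the momenta $p_\eps,q_\eps$ stay in a fixed compact set, and you rightly extract this from the sub-solution inequality together with the coercivity statement \textbf{(H2$'$)} rather than from the penalization (which would only give $\|p_\eps\|=\eps^{-1}d(x_\eps,y_\eps)=O(\eps^{-1/2})$). The monotonicity \textbf{(M$_-$)} then produces the term $-\lb\,(u(x_\eps)-v(y_\eps))\to-\lb\theta$ exactly as in the standard proof; note that the inequality $u(x_\eps)\geq v(y_\eps)$ needed to integrate $\partial H/\partial u\geq\lb$ in the right direction follows from $\Phi_\eps(x_\eps,y_\eps)\geq\theta>0$, which you implicitly use. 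The remaining technical points are standard and you flag them appropriately: for small $\eps$ the points $x_\eps,y_\eps$ lie within the injectivity radius of each other, so $d^2$ is smooth there and can be modified outside a neighborhood to yield a globally $C^1$ test function as required by Definition \ref{vis-def}; and in a fixed chart around $\bar x$ the two momenta differ by the parallel transport along a geodesic of length $d(x_\eps,y_\eps)\to0$, so along a further subsequence they share a limit and the right-hand side of your final inequality vanishes by continuity of $H$ on the compact set where everything lives. In short: a correct, self-contained proof of a statement the paper leaves to the references.
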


\begin{remark}
If $H:T^\ast M\times\R\rightarrow\R$ satisfies \textbf{(M$_+$)}, then $\breve{H}(x,p,u):=H(x,-p,-u)$ satisfies \textbf{(M$_-$)} and the equation (HJ$_-$) for $\breve{H}$ is just (HJ$_+$). Thus the comparison principle also applies to (HJ$_+$).
\end{remark}

To give an estimate of $u_{\pm}$, the idea is to find constant sub- and super-solutions of (HJ$_\pm$) and apply the comparison principle. For $H$ satisfying \textbf{(M$_\pm$)}, it is easily seen that there is $\mathrm{U}\in C^2(M,\R)$ such that
\[
H(x,0,\mathrm{U}(x))=0\quad\text{for any}\,\,\,x\in M.
\]
Assume $H$ satisfies \textbf{(M$_-$)} (resp. \textbf{(M$_+$)}), we define constants
\[
\underline{U}=\min_{x\in M}\mathrm{U}(x),\quad \overline{U}=\max_{x\in M}\mathrm{U}(x),
\]
it follows that
\begin{align*}
H(x,0,\underline{U})\leq&\,\, H(x,0,\mathrm{U}(x))=0\leq H(x,0,\overline{U})\\
(\text{resp.}\,\,H(x,0,-(-\overline{U}))\leq&\,\, H(x,0,\mathrm{U}(x))=0\leq H(x,0,-(-\underline{U}))).
\end{align*}
Thus
\begin{itemize}
  \item $u\equiv \underline{U}$ (resp. $u\equiv-\overline{U}$) is a sub-solution of (HJ$_-$) (resp. (HJ$_+$)),
  \item $u\equiv \overline{U}$ (resp. $u\equiv-\underline{U}$) is a super-solution of (HJ$_-$) (resp. (HJ$_+$)).
\end{itemize}
Note that $-u_+$ is By comparison principle for (HJ$_-$) (resp. (HJ$_+$)),
\begin{equation}\label{0-est}
\underline{U}\leq u_-(x)\leq \overline{U}\quad (\text{resp. }\,\,\underline{U}\leq u_+(x)\leq \overline{U}).
\end{equation}
Combining Definition \ref{vis-def} and Proposition \ref{vis}, for almost every $x\in M$,
\[
H(x,du_-(x),\underline{U})\leq0\quad (\text{resp. }\,\,H(x,du_+(x),\overline{U})\leq0).
\]
Thus by \textbf{(H2$^\prime$)}, for almost every $x\in M$,
\begin{equation}\label{1-est}
\|du_-(x)\|_{x}\leq P(0,\underline{U})\quad (\text{resp. }\,\,\|du_+(x)\|_{x}\leq P(0,\overline{U})).
\end{equation}
Notice that \eqref{0-est} and \eqref{1-est} give the desired estimate.


\begin{thebibliography}{9999}
\bibitem{Ar1} V. I. Arnold, {\it Ordinary differential equations. Translated from the third Russian edition by Roger Cooke}. Springer Textbook. Springer-Verlag, Berlin, 334 pp., 1992.



\bibitem{Ar2} V. I. Arnold, {\it Lectures on partial differential equations. Translated from the second Russian edition by Roger Cooke}. Universitext. Springer-Verlag, Berlin; Publishing House PHASIS, Moscow, x+157 pp., 2004.



\bibitem{Ba3} G. Barles, {\it Solutions de viscosit\'{e} des \'{e}quations de Hamilton-Jacobi}. (French) [Viscosity solutions of Hamilton-Jacobi equations] Math¨¦matiques \& Applications (Berlin) [Mathematics \& Applications], 17. Springer-Verlag, Paris, 1994. x+194 pp.




\bibitem{Br} A. Bravetti, {\it Contact Hamiltonian dynamics: the concept and its ussse}. Entropy \textbf{19}\,: 535, 2017.




\bibitem{BCT} A. Bravetti, H. Cruz, D. Tapias, {\it Contact Hamiltonian mechanics}. Ann. Phys. \textbf{376}\,: 17-39, 2017.




\bibitem{CCL1} R. Calleja, A. Celletti, R.de la Llave, {\it A KAM theory for conformally symplectic systems: efficient algorithms and their validation}. J. Differ. Equ. \textbf{255}\,: 978-1049, 2013.




\bibitem{CCL2} R. Calleja, A. Celletti, R. de la Llave, {\it Local behavior near quasi-periodic solutions of conformally symplectic systems}. J. Dynam. Differential Equations \textbf{25}\, (3): 821-841, 2013.




\bibitem{CCJWY} P. Cannarsa, W. Cheng, L. Jin, K. Wang, J. Yan, {\it Herglotz' variational principle and Lax-Oleinik evolution}. J. Math. Pures Appl. available online, 2020.   http://doi.org/10.1016/j.matpur. 2020.07.002.




\bibitem{CCWY} P. Cannarsa, W. Cheng, K. Wang, J. Yan, {\it Herglotz¡¯ generalized variational principle and contact type Hamilton-Jacobi equations}. Trends in Control Theory and Partial Differential Equations, Springer INdAM Series \textbf{32}\, , 39-67. Springer-Verlag, Berlin, 2019.




\bibitem{CS} P. Cannarsa, C. Sinestrari, {\it Semiconcave functions, Hamilton-Jacobi equations, and optimal control}, Progress in Nonlinear Differential Equations and their Applications. \textbf{58}, Birkh\"auser Boston, Inc., Boston, MA, 2004.




\bibitem{Ca} M. Casdagli, {\it Periodic orbits for dissipative twist maps}. Ergodic Theory Dynam. Systems \textbf{7}\, (2): 165-173, 1987.




\bibitem{C} C. Conley, {\it Isolated invariant sets and the Morse index}. CBMS Regional Conference Series in Mathematics Vol. \textbf{38}, American Mathematical Society, Providence, R.I. iii+89 pp., 1978.




\bibitem{CP} M. G. Crandall, P. L. Lions, {\it Viscosity solutions of Hamilton-Jacobi equations}. Trans. Amer. Math. Soc. \textbf{277}\,:1-42, 1983.





\bibitem{DFIZ} A. Davini, A. Fathi, R. Iturriaga, M. Zavidovique, {\it Convergence of the solutions of the discounted Hamilton-Jacobi equation: convergence of the discounted solutions}. Invent. Math. \textbf{206}: 29-55, 2016.




\bibitem{Ishii} H. Ishii, {\it A short introduction to viscosity solutions and the large time behavior of solutions of Hamilton-Jacobi equations, Hamilton-Jacobi equations: approximations, numerical analysis and applications}, Lecture Notes in Math. Vol. \textbf{2074}, Springer, Heidelberg, pp. 111-249, 2013.




\bibitem{IS} R. Iturriaga, H. Sanchez-Morgado, {\it Limit of the infinite horizon discounted Hamilton-Jacobi equation}. Discrete Contin. Dyn. Syst. Ser. B \textbf{15}\,(3): 623-635, 2011.




\bibitem{LC1} P. Le Calvez, {\it Existence d'orbites quasi-p\'{e}riodiques dans les attracteurs de Birkhoff}. Comm. Math. Phys. \textbf{106}\,(30): 383-394, 1986.




\bibitem{LC2} P. Le Calvez, {\it Propri\'{e}t\'{e}s des attracteurs de Birkhoff}. Ergodic Theory Dynam. Systems \textbf{8}\,(2): 241-310, 1988.




\bibitem{LC3} P. Le Calvez, {\it Dynamical properties of diffeomorphisms of the annulus and of the torus}. SMF/AMS Texts and Monographs, 4. American Mathematical Society, Providence, RI; Socit Mathmatique de France, Paris, x+105 pp., 2000.




\bibitem{LW} C. Liverani, M.P. Wojtkowski, {\it Conformally symplectic dynamics and symmetry of the Lyapunov spectrum}. Commun. Math. Phys. \textbf{194}: 47-60, 1998.




\bibitem{MS} S. Maro and A. Sorrentino, {\it Aubry-Mather theory for conformally symplectic systems}. Comm. Math. Phys. \textbf{354}\,(2): 775-808, 2017.



\bibitem{Mi2} J. Milnor, {\it On the concept of attractor}. Comm. Math. Phys. \textbf{99}\,(2): 177-195, 1985.




\bibitem{Mi3} J. Milnor, {\it Correction and remarks: ``On the concept of attractor''}. Comm. Math. Phys. \textbf{102}\,(3): 517-519, 1985.




\bibitem{R} R. T. Rockafellar, {\it Convex analysis}. Princeton Mathematical Series, No. 28 Princeton University Press, Princeton, N.J. 1970.




\bibitem{SWY} X. Su, L. Wang, J. Yan, {\it Weak KAM theory for Hamilton-Jacobi equations depending on unkown functions}. Discrete Contin. Dyn. Syst. \textbf{36}: 6487-6522, 2016.




\bibitem{WWY1} K. Wang, L. Wang, J. Yan, {\it Implicit variational principle for contact Hamiltonian systems}. Nonlinearity \textbf{30}: 492-515, 2017.




\bibitem{WWY2} K. Wang, L. Wang, J. Yan, {\it Variational principle for contact Hamiltonian systems and its applications}. J. Math. Pures Appl. \textbf{123}\,(9): 167-200, 2019.




\bibitem{WWY3} K. Wang, L. Wang and J. Yan, {\it Aubry-Mather theory for contact Hamiltonian systems}, Comm. Math. Phys. \textbf{366}\,(3): 981-1023, 2019.




\end{thebibliography}
\end{document}